\definecolor{darkblue}{rgb}{0.0,0.0,0.6}
\definecolor{darkgreen}{rgb}{0.0,0.6,0.0}
\numberwithin{table}{section}    % for Table 1.1
\numberwithin{figure}{section}   % for Figure 1.1
\numberwithin{equation}{section} % for equation (1.1)
\newtheorem{assumption}[theorem]{Assumption}
\newtheorem{remark}[theorem]{Remark}
\renewcommand{\bl}[1]{\textcolor{black}{#1}}
\newcommand{\ro}[1]{\textcolor{black}{#1}}
\newcommand{\dz}{\delta z}
\newcommand{\ff}{\forall\,}
\newcommand{\eps}{\varepsilon}
\newcommand{\e}{\varepsilon}
\newcommand{\yy}{\bar y}
\newcommand{\ld}{L^2(D)}
\newcommand{\lo}{L^2(D)}
\newcommand{\hde}{H^s(D\setminus \bar E)}
\newcommand{\lde}{L^\infty(D\setminus \bar E)}
\newcommand{\hd}{H^1_0(D)}
\newcommand{\g}{\gamma}
\newcommand{\li}{L^\infty(D)}
\newcommand{\dense}{\overset{d}{\embed}}
\renewcommand{\o}{\omega}
\newcommand{\dn}{D_n^{convex}}
\newcommand{\dnn}{D_n^{concave}}
\begin{document}

\title{Optimal control of  a non-smooth elliptic PDE with  non-linear term acting on the control}
%\title{Strong Stationarity for a highly nonsmooth optimal control problem  with  control  constraints
%}
\date{\today}
\author{L.\ Betz\footnotemark[1]}
\renewcommand{\thefootnote}{\fnsymbol{footnote}}
\footnotetext[1]{Faculty of Mathematics, University of W\"urzburg,  Germany}
\renewcommand{\thefootnote}{\arabic{footnote}}

\maketitle

 \begin{abstract}
This paper continues the investigations from \cite{p1} and  is  concerned with the derivation of first-order conditions for a control constrained optimization problem governed by a non-smooth elliptic PDE. The control enters  the state equation not only  linearly but also as the argument of a regularization of the Heaviside function. The non-linearity which acts  on the state  is locally  Lipschitz-continuous and not necessarily differentiable, i.e., non-smooth. This excludes the application of standard adjoint calculus. We derive conditions under which a strong stationary optimality system can be established, i.e., a system that is  equivalent 
 to the purely primal optimality condition saying that the directional derivative of the reduced objective 
 in feasible directions is nonnegative. For this, two assumptions are made on the unknown optimizer.
 % These are  fulfilled  if the non-smoothness is locally convex around its non-differentiable points and if an estimate involving only the given data  is true. 
  Some of the presented findings are employed in the recent contribution \cite{p3}, where    limit optimality systems for non-smooth shape optimization problems \cite{p1} are established.

 \end{abstract}

\begin{keywords}
optimal control of PDEs, non-smooth optimization, strong stationarity, control constraints \end{keywords}

\begin{AMS}
35Q93, 49K20
\end{AMS}

%35Q93 PDEs in connection with control and optimization
%49K20  	Optimality conditions for problems involving partial differential equations
  \section{Introduction}
This paper is concerned with the derivation of optimality conditions for the following non-smooth optimal  control problem\begin{equation}\tag{$P_\eps$}\label{p10}
 \left.
 \begin{aligned}
  \min_{g \in   \FF} \quad & \int_E (y(x)-y_d(x))^2 \; dx+\alpha\,\int_{D} (1-H_\eps(g))(x)\,dx+\frac{1}{2}\,\|g-\bar g_{\mathfrak{sh}}\|_{\WW}^2  \\   \text{s.t.} \quad & 
  \begin{aligned}[t]
   -\laplace y + \beta(y)+\frac{1}{\eps}H_\eps(g)y&=f +\eps g\quad \text{in }D,
   \\y&=0\quad \text{ on } \partial D, \end{aligned} 
 \end{aligned}
 \quad \right\}
\end{equation}where $\eps>0$ is a fixed parameter. The fixed domain $D\subset \R^N, N \in\{2,3\},$ is bounded, of class $C^{1,1}$, and $f \in L^2(D)$. The symbol $-\laplace: \hd \to H^{-1}(D)$ denotes the Laplace operator in the distributional sense; note that $\hd$  is the closure of the set {$ C^\infty_c(D) $} w.r.t.\,the $H^1(D)-$norm, {while $H^{-1}(D)$ stands for its dual}. The control $g$ enters the state equation not only linearly but also  in a \textit{non-linear} fashion through the mapping $H_\e$. This is the regularization of the Heaviside function, cf.\,\eqref{reg_h} below.  The mapping $\beta$ is \textit{non-smooth}, that is, not necessarily differentiable, see Assumption  \ref{assu:stand}   below for details.
 This constitutes one of the main challenges in the present manuscript, as it  excludes the application of standard adjoint calculus techniques. 
The desired state $y_d$ is an $L^2-$function, which is defined on the  set  $E$ (Assumption \ref{assu:E}).   The parameter $\alpha$ appearing in the objective is supposed to satisfy $\alpha \geq 0$.   In all what follows, $\WW$ is the Hilbert space $\ld \cap \hde$, $s> 0$, endowed with the norm 
\[\|\cdot\|_{\WW}^2:=\|\cdot\|_{\ld}^2+\|\cdot\|_{H^s(D\setminus \bar E)}^2,\]
and 
\begin{equation}\label{f_tilde}
 \FF:=\{g \in \WW: g \leq 0\text{ a.e.\,in }E\}.
\end{equation}
 In the objective of \eqref{p10}, $\bar g_{\mathfrak{sh}} \in \FF$ is fixed.

The present paper is deeply connected to the  contribution \cite{p1}, where we introduced  an approximation scheme of fixed domain type \cite{nt, nt2} for a non-smooth  shape optimization problem. In \cite{p1}, the mapping $\bar g_{\mathfrak{sh}}$ is a so-called locally optimal shape function, which arises as the limit of a sequence of local minima of \eqref{p10} \cite[Thm.\,5.2]{p1}. 
While it continues the investigations from \cite{p1}, the current work stands on its own. Let us underline that  no knowledge on the topic of shape and topology optimization is required here.  This was precisely the idea in \cite{p1}: to introduce an approachable approximating optimal control problem where the admissible set is a (preferably convex) subset of a Hilbert space of functions.   By proceeding in this way, the variable character of the geometry, that is, the main feature and difficulty of the original shape optimization problem \cite{p1}, is out of the picture.  As advertised in \cite{p1}, the present manuscript is entirely concerned with the derivation of optimality conditions for the respective approximating minimization problem, namely for \eqref{p10}.
Despite of the non-smoothness, we establish an optimality system that is equivalent to the first-order necessary optimality condition in primal form, i.e., of strong stationary type. Since we deal with control constraints,  one expects the necessity of so-called "constraint qualifications" \cite{mcrf, wachsm_2014, amo} (assumptions on the unknown local optimizer  \cite[Sec.\ 2]{gk}, abv.\,"CQs"). 
%However, we formulated \eqref{p10} in such a manner (Remark \ref{rem:eps}), that this kind of CQ is not needed at all, provided that $\beta$ is locally convex around each non-differentiable point that is attained  by the optimal state, cf.\,Remark \ref{rem:s}. 
Nevertheless, an additional CQ is required, which is not  related to the constraint in the definition of $\FF$. This is necessary because  the control $g$ appears in the argument of the non-linear mapping $H_\e$ (which then gets multiplied by the state). We can state an estimate featuring  the given data only under which this is satisfied (Corollary \ref{corr}). \ro{If we also take the framework of \cite{p1,p3} into account, the optimality system from Theorem \ref{thm} is strong stationary for $\e$ ``small'', provided that $\beta$ is locally convex around each non-differentiable point that is attained  by the optimal state  (Remark \ref{remm}).} The presence of the $H^s$ term in the objective of \eqref{p10} ensures the existence of optimal solutions and the validity of  a first optimality system. We underline that we  consider the $H^s(D\setminus \bar  E)$ norm (not the full $H^s(D)$ norm), in order to conclude strong stationary optimality conditions for local optima of  \eqref{p10}, see Theorems \ref{thm} and \ref{thm:equiv_B_strong} below. If $\WW=H^s(D)$, the sign conditions \eqref{sign_p0}-\eqref{sign_p1} are not  available, and the optimality system for \eqref{p10} in Theorem \ref{thm} reduces to the weaker one obtained via smoothening methods, cf.\,\eqref{eq:lem} and Remark \ref{rem:sign}.  

\bl{We emphasize that the upcoming analysis can be extended 
to situations where  the assumptions on the involved quantities in \eqref{p10} are  weaker. The mapping $H_\e$ can be replaced by a  Lipschitz continuous, differentiable function. An additional dependence on the spatial variable  can be included in the non-smoothness $\beta$. In this case, $\beta:D \times \R \to \R$ is supposed to be a Carath\'eodory function with $\beta(\cdot,0) \in \ld$, which is \ro{non-decreasing} in its second variable and satisfies a Lipschitz continuity condition similar to the one in Assumption \ref{assu:stand}.\ref{it:stand2}. A (smooth) divergence-gradient-operator can be considered instead of the Laplacian.
The reason why we stick to the present setting is that these specific  requirements are needed to prove that \eqref{p10} is an approximating problem for the shape optimization problem from \cite{p1}, cf.\,\cite[Thm.\,5.2]{p1}.
%besides, we want to avoid repeating the proofs
%, if we do not take into account the relation between \eqref{p10} and the shape optimization problem from \cite{p1}. 
In the recent manuscript \cite{p3} we pass to the limit $\e \to 0$ in the optimality system associated to \eqref{p10} and we  work again  in the framework of  \cite{p1}, the final goal being to obtain first-order conditions for the original shape optimization problem. Therein, it is   needed that we confine ourselves to additional  requirements such as $s>1$ and $D \subset \R^2$, as these are necessary for \cite[Thm.\,5.2]{p1}.}

 %Let us recall the reasons why the approx pb is defined like this.
%First of all, the state equation in \eqref{p10} is an approximating extension of the state equation in \eqref{p_shh} from $\O_g$ to $D$ \cite{nt}. We point out the presence of the additional term $+\eps g$ on the right hand side, which is essential for the proof of our main strong stationarity result in \ko{\cite{p2}}. The sign of $\eps g$ is relevant as well,  if we take $-\eps g$ on the right hand side of the state equation we need additional assump and if we do not take $\eps g$ at all, then it fails.

% 
% -------------------------------------
% We aim at proving str stat..
% The problem we deal with is challenging.....
% how we deal with.....
%   Nevertheless, in spite of the presence of control constraints, \eqref{p10} is formulated so that it allows for the derivation of a so-called strong stationary optimality system (complete optimality system) without imposing 'constraint qualifications' \cite{mcrf, wachsm_2014}, provided that $\beta$ is convex around its non-differentiable points \ko{\cite{p2}}, cf.\,also Remark \ref{rem:ss}. 

Deriving necessary optimality conditions is a challenging issue even in finite dimensions, where a special attention is given to MPCCs  (mathematical programs with complementarity constraints).
In \cite{ScheelScholtes2000} a detailed overview of various optimality conditions of different strength was introduced, the  most rigorous concept being strong stationarity. See also \cite{HintermuellerKopacka2008:1} for the infinite dimensional case.
Roughly speaking, the strong stationarity conditions involve an optimality system, which is equivalent to the purely primal conditions saying that the directional derivative of the reduced objective in feasible directions is nonnegative (which is referred to as B-stationarity).
%In this paper, we follow the terminology in \cite{ScheelScholtes2000}, and say that  an optimality  system is strong stationary if it is  equivalent to the purely primal condition saying that the directional derivative of the reduced objective in feasible directions is nonnegative (). In accordance with the notion known for MPECs, the primal optimality condition will be called Bouligand (B) stationarity.
While there are plenty of contributions in the field of optimal control of 
smooth problems \cite{troe}, fewer papers are dealing with 
non-smooth problems. Most of these works resort to smoothening techniques that go back to \cite{barbu84}, see also \cite{tiba} for an extensive study. The optimality systems derived in this way are of intermediate strength and are not expected to be of strong stationary type, since one always loses information when passing to the limit in the regularization scheme (cf.\ also Remark \ref{rem:sign}). Thus, proving strong stationarity for optimal control of  non-smooth problems requires direct approaches, which employ the limited differentiability properties of the control-to-state map. Based on the ideas from the pioneering contribution \cite{mp84} (obstacle problem), the literature on this topic has been growing in the past years and we mention here  \cite{ hmw13,  wachsm_2014, DelosReyes-Meyer, brok_ch, quasi , cc, ira_w, by18}, where strong stationarity for the control of  various kinds of VIs has been established. Regarding strong stationarity for  the optimal control of non-smooth PDEs, we mention  \cite{paper} (parabolic PDE), \cite{cr_meyer} (elliptic PDE), \cite{quasi_nonsmooth} (elliptic quasilinear PDE), \cite{st_coup} (coupled PDE system), and the more recent works \cite{ mcrf, jc_pq, aos, amo}. We point out that  the vast majority of these papers  do not take control constraints into account. This case is more difficult to handle, as it requires assumptions on the unknown optimizer ("constraint qualifications") \cite{wachsm_2014, mcrf, amo}. With the exception of \cite{amo}, we are not aware of any contribution dealing with strong stationarity,  where the control appears in the argument of a non-linear mapping in the state equation, as in \eqref{p10}. Note that this aspect might also require  CQs, see  \cite[Assump.\,4.3, Rem.\,4.12]{amo}, \cite[Rem.\,2.16]{st_coup} and Assumption \ref{assu:h_a} below.

Let us give an overview of the structure and main results of this paper. After introducing the assumptions,   we recall some  results from \cite{p1} that will be useful in the sequel (section \ref{sec:2}). Then, at the beginning of section \ref{oc}, we state a first (rather standard) optimality system  for local optima of \eqref{p10} in Lemma \ref{lem:tool}. This is obtained under the mild requirement that $\beta$ is locally convex or concave around its non-differentiable points (Assumption \ref{assu:reg}).
The optimality conditions from Lemma \ref{lem:tool}  serve as an \textit{auxiliary result only} and are not of strong stationary type (Remark \ref{rem:sign}). They are however important and will be exploited later on in the proof of the main result, i.e., Theorem \ref{thm}. 
In subsection \ref{b} we establish the directional differentiability  of the control to state map (Lemma \ref{lem:dirderiv}), \bl{which allows us to state the first-order necessary condition in its primal form, also known as  B-stationarity, see \eqref{eq:vi}}. Then, in subsection \ref{sec:ss}  we proceed towards the derivation of strong stationary conditions   for the optimal control  problem \eqref{p10}, see Theorems \ref{thm} and \ref{thm:equiv_B_strong}. 

We point out that the formulation of \eqref{p10} is slightly reminiscent of the control problem recently studied in \cite{mcrf} with an \textit{additional challenge}: the control appears in the argument of the non-linearity $H_\eps$, which then gets multiplied by the state. To deal with this aspect, we carefully adapt the ideas from \cite{mcrf} in the proof of Theorem \ref{thm}. \bl{Having established a first optimality system in Lemma \ref{lem:tool}, we just need to prove suitable sign conditions for the adjoint state, see \eqref{needed}. These follow from the B-stationarity result by imposing two CQs. The first one, i.e., Assumption \ref{assu:h_a}, ensures  that the range of the derivative of the whole term acting on $g$ in the state equation, i.e., $\frac{1}{\e}H_\e'( g ) y -\e$, covers an entire space \cite[Rem.\,2.16]{st_coup}. This is  different from the situation in \cite{mcrf}, where there was no coupling of control and state. The control to state map associated to the ``linearized'' state equation in \cite{mcrf} was already surjective.
Hence, Assumption \ref{assu:h_a}  is the price to pay for \ro{obtaining additional conditions to the one from Lemma \ref{lem:tool}} for a non-smooth optimal control problem that features   a \textit{non-linear term acting on the control
   in  the state equation}, see Theorem \ref{thm}.}  In   section \ref{sec:rem}, we derive a simple condition on the given data under which this is true, while keeping in mind that, in \cite{p3}, we are looking at  Assumption \ref{assu:h_a}   in the setting of  \cite{p1} (Lemma \ref{lem}).
\bl{Indeed, \cite{amo} also dealt with a coupling of control and state in the presence of control constraints, but at the same time, the  number of controls was larger than two. While the  constraints were imposed on a certain control, the state was multiplied with the other (unconstrained) controls. For this reason, the above mentioned surjectivity was available in \cite{amo}  without imposing a CQ related to control constraints \cite[Rem.\,4.13]{amo}. In the present paper, we cannot dispense of the latter as we deal with a single control.}  Our  second CQ, i.e., Assumption \ref{assu:cq}, is  a  natural claim  \cite{wachsm_2014, mcrf} and  it is also less restrictive when compared with  findings obtained in other similar contributions (Remark \ref{rem:needed}). In fact, \ro{Assumption \ref{assu:cq} is not even needed in the framework of \cite{p3}, due to an indispensable requirement in \cite{p1}, which yields the desired sign condition  for the adjoint state (for $\eps$ small) when $\beta$ is locally convex around its non-smooth points (Remark \ref{remm}).
\subsection*{Notation}
 Let
$X$ and $Y$ be two Banach spaces. If $X$ is compactly embedded in $Y$ we write $X \embed \embed Y$ and $X \dense Y$ means that $X$ is dense in $Y.$ The open ball around $x\in X$ with radius $M>0$ is denoted by $B_X(x,M)$.
In the sequel, $d(A,B)$ is the distance between two sets $A,B \subset D$, i.e, \[d(A,B)=\inf_{x\in A, y \in B} \|x-y\|_{\R^N}.\]
The symbol $\chi_A$ stands for the characteristic function associated with the set $A$.
\bl{The $N$-dimensional Lebesgue measure of (a measurable set)  $A$ is abbreviated by $\mu(A).$}
}
  \section{Standing assumptions and preliminary results}\label{sec:2}
  Throughout the paper, $\eps>0$ is a fixed parameter. We begin this section by specifying the precise assumptions on the non-smoothness $\beta$ appearing in the state equation in \eqref{p10}. We also mention the requirements needed for the  set $E$. For the rest of the assumptions on the given data, see the introduction.  
%The section ends with some known results from the previous contribution \cite{p1}. 
\begin{assumption}[The non-smoothness]\label{assu:stand}
 \begin{enumerate}
 \item\label{it:stand2}The  function  $\beta: \R \to \R$ is    
\ro{non-decreasing} and {locally} Lipschitz continuous {in the following sense: F}or all $M>0$, there exists 
  a constant $L_M>0$ such that
  \begin{equation*}
   |\beta(z_1) - \beta(z_2)| \leq L_M |z_1 - z_2| \quad \forall\, z_1, z_2 \in [-M,M] .
  \end{equation*}
  \item\label{it:stand3}The mapping $\beta$ is directionally differentiable at every point, i.e., 
  \begin{equation*}
   \Big|\frac{\beta(z + \tau \,\dz) - \beta(z)}{\tau} - \beta'(z;\dz)\Big| \stackrel{\tau \searrow 0}{\longrightarrow} 0 \quad \forall \, z,\dz \in \R.
  \end{equation*}
   % \item\label{it:h} The non-linearity $H_\eps:\R \to [0,\infty)$ is Lipschitz continuous and continuously differentiable.
  %  \item\label{union} The set $D\setminus E$ is a finite union of Lipschitz domains.
  \end{enumerate}
\end{assumption}

By Assumption \ref{assu:stand}, it is straightforward to see that the Nemytskii operator $\beta:\li  \to \li$ is well-defined. Moreover, this is Lipschitz continuous on bounded sets in the following sense:  for every  $M > 0$, 
  there exists $L_M > 0$ so that
  \begin{equation}\label{eq:flip}
   \|\beta(y_1) - \beta(y_2)\|_{L^q(D)} \leq L_M \, \|y_1 - y_2\|_{L^q(D)} \quad \forall\, y_1,y_2 \in \clos{B_{\li}(0,M)},\ \forall\, 1\leq q \leq \infty.
  \end{equation}

\begin{assumption}\label{assu:E}
The  set $E\subset D$  is a  subdomain with boundary of measure zero. Moreover, $d(\clos E,\partial D)>0.$ \end{assumption}

In the rest of the paper, one tacitly supposes that Assumptions \ref{assu:stand} and \ref{assu:E} are always fulfilled without mentioning them every time. 

\begin{definition}
The non-linearity $H_\eps:\R \to [0,1]$ is defined as follows 
 \begin{equation}\label{reg_h}
H_\eps(v):=\left\{\begin{aligned}0,&\quad \text{if }v\leq 0,
\\\frac{v^2(3\eps-2v)}{\eps^3},&\quad \text{if }v\in (0,\eps),
\\1,&\quad \text{if }v\geq \eps.
\end{aligned}\right.\end{equation}\end{definition}

For later purposes, we compute here its derivative:
 \begin{equation}\label{reg_h'}
H_\eps'(v):=\left\{\begin{aligned}0,&\quad \text{if }v\leq 0,
\\\frac{6v(\eps-v)}{\eps^3},&\quad \text{if }v\in (0,\eps),
\\0,&\quad \text{if }v\geq \eps.
\end{aligned}\right.\end{equation}
\begin{remark}\label{rem:h_e}The mapping $H_\eps$ introduced in \eqref{reg_h}  is Lipschitz continuous and continuously differentiable.
It is obtained as a regularization of the Heaviside function $H:\R \to [0,1]$, $H(v)=0$ if $v\leq 0$ and $H(v)=1$ if $v>0.$
Note that Heaviside functions and their regularizations play an essential role  in the context of shape optimization via fixed domain approaches, see for instance \cite{nt, nt2, p1} and the references therein.
\end{remark}

We proceed by recalling some useful findings from \cite{p1}.
\begin{lemma}[Solvability of the state equation, {\cite[Lem.\,4.5]{p1}}]\label{lem:S}
For any right-hand side $g \in L^2(D)$, the state equation in  \eqref{p10} admits a unique solution $ y \in H^1_0(D) \cap H^2(D)$. 
This satisfies 
\begin{equation}\label{est_y}
\|y\|_{H^1_0(D) \cap C(\bar D)} \leq \ro{c\|f-\beta(0)\|_{\ld}+c\,\eps\,\|g\|_{\ld},}
\end{equation}
where $c>0$ is \ro{independent of  $\beta, f, \eps$ and  $g$}.
The control-to-state mapping $S:L^2(D) \to H^1_0(D) \cap H^2(D)$ is  Lipschitz continuous on bounded sets, i.e.,
for every  $M > 0$, 
  there exists $L_{M} > 0$ so that
  \begin{equation}\label{slip}
   \|S(g_1) - S(g_2)\|_{H^1_0(D) \cap H^2(D)} \leq L_{M} \, \|g_1 - g_2\|_{L^2(D)} \quad \forall\, g_1,g_2 \in \clos{B_{\ld}(0,M)}.
  \end{equation}

\end{lemma}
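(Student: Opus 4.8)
The plan is to treat the state equation as a semilinear elliptic problem whose zeroth-order part is monotone, and to exploit this structure throughout. Writing $c_g:=\frac{1}{\e}H_\e(g)$, we note from \eqref{reg_h} that $0 \le c_g \le 1/\e$, so that $c_g \in \li$ is nonnegative, while $\beta$ is non-decreasing by Assumption \ref{assu:stand}.\ref{it:stand2}. Consequently the operator
\[
\langle A y, v\rangle := \int_D \nabla y \cdot \nabla v \, dx + \int_D (\beta(y)-\beta(0))\, v\, dx + \int_D c_g\, y\, v\, dx
\]
is monotone and, thanks to Poincar\'e applied to the principal part, coercive on $\hd$; the original equation then reads $Ay = f-\beta(0)+\e g$. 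The only delicate point is that $\beta$ is merely \emph{locally} Lipschitz, so the associated Nemytskii operator need not map $\hd$ into $H^{-1}(D)$. I would circumvent this by first replacing $\beta$ with a globally Lipschitz, monotone truncation $\beta_M$ agreeing with $\beta$ on $[-M,M]$, solving the truncated equation by the Browder--Minty theorem, and afterwards removing the truncation once a uniform $\li$ bound is available.

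The a priori bound \eqref{est_y} is obtained in two steps. Testing the (truncated) equation with $y$ and using $(\beta_M(y)-\beta(0))y \ge 0$ together with $c_g \ge 0$ yields
\[
\|\nabla y\|_{\ld}^2 \le \int_D (f-\beta(0)+\e g)\, y\, dx \le \|f-\beta(0)+\e g\|_{\ld}\,\|y\|_{\ld},
\]
which, via Poincar\'e, gives the $\hd$ part of \eqref{est_y}. For the $C(\bar D)$ bound I would run a Stampacchia truncation argument: when testing with $(y-k)^+$, the monotone lower-order terms carry a favourable sign and can be discarded, so $y$ obeys the same $\li$ estimate as the solution of $-\laplace y = f-\beta(0)+\e g$. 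Since $f-\beta(0)+\e g \in \ld$ and $N\le 3$ (hence $2>N/2$), this produces an $\li$ bound of exactly the asserted linear form, with a constant independent of $M$. Choosing $M$ above this bound forces $\beta_M(y)=\beta(y)$, so $y$ solves the original equation; the right-hand side $f-\beta(0)+\e g-(\beta(y)-\beta(0))-c_g y$ then lies in $\ld$, and $C^{1,1}$-elliptic regularity upgrades $y$ to $\hd \cap H^2(D)$, with $H^2(D)\embed C(\bar D)$ delivering continuity.

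Uniqueness is immediate: subtracting two solutions, testing with their difference, and invoking monotonicity of $\beta$ together with $c_g\ge 0$ leaves only $\|\nabla(y_1-y_2)\|_{\ld}^2 \le 0$. For the stability estimate \eqref{slip}, fix $g_1,g_2 \in \clos{B_{\ld}(0,M)}$ and set $y_i:=S(g_i)$; by \eqref{est_y} both are bounded in $\li$ by a constant depending only on $M$. Subtracting the equations and splitting
\[
H_\e(g_1)y_1 - H_\e(g_2)y_2 = H_\e(g_1)(y_1-y_2) + (H_\e(g_1)-H_\e(g_2))\,y_2,
\]
I would first test with $y_1-y_2$: the $\beta$-difference term and the $H_\e(g_1)(y_1-y_2)^2$ term are nonnegative, while on the right the remaining contribution is controlled using that $H_\e$ is Lipschitz (Remark \ref{rem:h_e}) and $y_2\in\li$, giving $\|y_1-y_2\|_{\hd} \le C\|g_1-g_2\|_{\ld}$. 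Feeding this back into the equation for $y_1-y_2$, viewed as $-\laplace(y_1-y_2)=F$, and bounding $\|F\|_{\ld}$ via \eqref{eq:flip} and the Lipschitz property of $H_\e$ gives $\|F\|_{\ld}\le C\|g_1-g_2\|_{\ld}$; elliptic regularity then yields the full $\hd\cap H^2(D)$ estimate \eqref{slip}.

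The \emph{main obstacle} is the interplay between the merely local Lipschitz continuity of $\beta$ and the need for global solvability: absent a global growth bound, the nonlinear operator is not a priori well-defined on $\hd$, and the entire argument hinges on producing the truncation-independent $\li$ estimate in \eqref{est_y}. Once that is in hand, removal of the truncation, the $H^2$ regularity, and the Lipschitz stability all follow from the monotone structure and standard elliptic regularity on the $C^{1,1}$ domain $D$.
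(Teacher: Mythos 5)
Your proof is correct, and it follows the route one would expect for this result: the paper itself gives no argument here (Lemma \ref{lem:S} is imported verbatim from \cite[Lem.\,4.5]{p1}), and your combination of a monotone globally Lipschitz truncation $\beta_M$ with Browder--Minty, a truncation-independent Stampacchia $L^\infty$ bound (valid since $N\le 3$ gives $2>N/2$), removal of the truncation, $C^{1,1}$-elliptic $H^2$ regularity with $H^2(D)\embed C(\bar D)$, and the splitting $H_\eps(g_1)y_1-H_\eps(g_2)y_2=H_\eps(g_1)(y_1-y_2)+(H_\eps(g_1)-H_\eps(g_2))y_2$ for \eqref{slip} is a complete, self-contained proof with the constants having exactly the claimed dependence (the Poincar\'e/Stampacchia constants depend only on $D$, hence $c$ is independent of $\beta,f,\eps,g$, while $L_M$ may depend on the fixed $\eps$ and on $M$, as allowed). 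No gaps found.
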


Since  it suffices that $s>0$ to ensure the compactness of the embedding $\hde \embed \embed L^2(D \setminus \bar E)$ 
\cite[Thm.\,4.14]{kaballo}, all the arguments in the proof of {\cite[Lem.\,4.13]{p1}} remain unaffected by the fact that we now work with $s>0,$ not with $s>1$ as in \cite{p1}. Thus, in the present setting, the next two results are valid.

\begin{lemma}[Convergence properties, {\cite[Lem.\,4.13]{p1}}]\label{wc}
Let $\{g_n\} \subset  \FF$ with \[g_n \weakly g \quad \text{in } \WW,\text{ as }n \to \infty.\] Then, 
\begin{equation}\label{he_conv}
H_\eps (g_{n}) \to H_\eps(g) \quad \text{in }L^2(D), \ \text{ as }n \to \infty,\end{equation}
\begin{equation}\label{wc_S}
S(g_n) \weakly S(g) \quad \text{in }H^2(D) \cap \hd, \text{ as }n \to \infty.\end{equation}
\end{lemma}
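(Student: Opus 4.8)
The plan is to treat the two convergences separately, beginning with the nonlinear term \eqref{he_conv}, which is the crux, and then bootstrapping the state convergence \eqref{wc_S} from it. For \eqref{he_conv}, the key observation is that the sign constraint built into $\FF$ makes $H_\eps$ degenerate precisely where we lack strong convergence. I would split $D$ into $E$ and $D\setminus\bar E$, which cover $D$ up to the null set $\partial E$ (Assumption \ref{assu:E}). Since $\FF$ is convex and closed in $\WW$, it is weakly closed, so the weak limit $g$ again lies in $\FF$; hence $g_n\le 0$ and $g\le 0$ a.e.\ in $E$. By the definition \eqref{reg_h} of $H_\eps$, which vanishes on $(-\infty,0]$, this forces $H_\eps(g_n)=H_\eps(g)=0$ a.e.\ in $E$, so the contribution on $E$---where only weak $\ld$-convergence is at hand---drops out entirely. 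On $D\setminus\bar E$, the weak convergence $g_n\weakly g$ in $\hde$ combined with the compact embedding $\hde\embed\embed L^2(D\setminus\bar E)$ gives $g_n\to g$ strongly in $L^2(D\setminus\bar E)$, whence the Lipschitz continuity of $H_\eps$ (Remark \ref{rem:h_e}) yields $H_\eps(g_n)\to H_\eps(g)$ in $L^2(D\setminus\bar E)$. Summing the two pieces establishes \eqref{he_conv}.

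For \eqref{wc_S}, set $y_n:=S(g_n)$. As $g_n\weakly g$ in $\ld$, the sequence $\{g_n\}$ is bounded in $\ld$, so \eqref{est_y} bounds $\{y_n\}$ in $\hd\cap C(\bar D)$, in particular in $\li$. Rewriting the state equation as $-\laplace y_n=f+\eps g_n-\beta(y_n)-\tfrac{1}{\eps}H_\eps(g_n)y_n$ and noting that $\beta(y_n)$ is bounded in $\li$ (by \eqref{eq:flip} and the $L^\infty$-bound on $y_n$), while $0\le H_\eps(g_n)\le 1$ makes $\tfrac{1}{\eps}H_\eps(g_n)y_n$ bounded in $\li$ too, the right-hand side is bounded in $\ld$. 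Elliptic regularity on the $C^{1,1}$-domain $D$ then bounds $\{y_n\}$ in $H^2(D)\cap\hd$, so some subsequence satisfies $y_{n_k}\weakly\tilde y$ in $H^2(D)\cap\hd$ and, by the compact embeddings $H^2(D)\embed\embed\hd$ and $H^2(D)\embed\embed C(\bar D)$ (available for $N\le 3$), also $y_{n_k}\to\tilde y$ strongly in $\hd$ and in $C(\bar D)$. I would then pass to the limit in the weak form of the state equation tested with $\varphi\in\hd$: the gradient term converges by weak $H^1$-convergence, $\beta(y_{n_k})\to\beta(\tilde y)$ in $\ld$ by continuity and the uniform convergence, the source converges since $g_{n_k}\weakly g$ in $\ld$, and the coupling term $\int_D H_\eps(g_{n_k})y_{n_k}\varphi\,dx$ tends to $\int_D H_\eps(g)\tilde y\varphi\,dx$ upon combining \eqref{he_conv} with $y_{n_k}\to\tilde y$ in $C(\bar D)$. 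This identifies $\tilde y=S(g)$. As the limit does not depend on the subsequence and $\{y_n\}$ is bounded in the reflexive space $H^2(D)\cap\hd$, a standard subsequence argument upgrades the convergence to the whole sequence, yielding \eqref{wc_S}.

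The main obstacle is \eqref{he_conv}: passing to the limit in the \emph{nonlinear} map $H_\eps$ cannot be done from weak $\ld$-convergence alone. The argument hinges on recognizing that the sign constraint defining $\FF$ annihilates $H_\eps$ exactly on $E$, the region where compactness is missing, so that strong convergence is needed---and, via $\hde\embed\embed L^2(D\setminus\bar E)$, obtained---only on $D\setminus\bar E$. The remainder of \eqref{wc_S} is then routine, apart from some care with the product $H_\eps(g_{n_k})y_{n_k}$, which pairs an $L^2$-convergent factor with a uniformly convergent one.
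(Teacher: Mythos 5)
Your proposal is correct and takes essentially the same approach as the paper: the paper recalls this lemma from \cite{p1} and justifies its validity for $s>0$ precisely by the compact embedding $\hde \embed\embed L^2(D\setminus \bar E)$, which---combined with the observation that the sign constraint in $\FF$ (preserved under weak limits, since $\FF$ is convex and closed) forces $H_\eps(g_n)=H_\eps(g)=0$ a.e.\ in $E$---is exactly the mechanism you use for \eqref{he_conv}, after which \eqref{wc_S} follows by the standard bound via \eqref{est_y}, elliptic regularity on the $C^{1,1}$ domain, and a subsequence--uniqueness argument. Your handling of the coupling term $H_\eps(g_{n_k})y_{n_k}$ through the uniform bound $0\le H_\eps\le 1$ together with convergence in $C(\bar D)$ is also sound, so there are no gaps.
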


\begin{proposition}[Existence of solution for \eqref{p10}, {\cite[Prop.\,4.14]{p1}}]\label{ex_e}
The approximating optimal control problem \eqref{p10} admits at least one global minimizer in $\FF$.
\end{proposition}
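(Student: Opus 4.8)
The plan is to argue by the direct method of the calculus of variations; the decisive compactness and continuity properties have already been supplied by Lemma \ref{wc}, so the argument will be short. Denote by
\[
j(g) := \int_E (S(g)-y_d)^2 \, dx + \alpha \int_D \big(1 - H_\eps(g)\big) \, dx + \tfrac12 \|g - \bar g_{\mathfrak{sh}}\|_{\WW}^2
\]
the reduced objective associated with \eqref{p10}, where $S$ is the control-to-state map furnished by Lemma \ref{lem:S}. Since $\alpha \geq 0$ and $H_\eps$ takes values in $[0,1]$ by \eqref{reg_h}, every summand is nonnegative, and as $\bar g_{\mathfrak{sh}} \in \FF$ we have $\FF \neq \emptyset$; hence $m := \inf_{g \in \FF} j(g) \in [0, j(\bar g_{\mathfrak{sh}})]$ is finite, and there exists a minimizing sequence $\{g_n\} \subset \FF$ with $j(g_n) \to m$.

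First I would establish coercivity. Because all three terms in $j$ are nonnegative and $\{j(g_n)\}$ is bounded, the Tikhonov-type term satisfies $\tfrac12 \|g_n - \bar g_{\mathfrak{sh}}\|_{\WW}^2 \leq j(g_n) \leq C$ for all $n$, so $\{g_n\}$ is bounded in $\WW$. As $\WW$ is a Hilbert space, we may extract a (non-relabeled) subsequence with $g_n \weakly \bar g$ in $\WW$. The feasible set $\FF$ defined in \eqref{f_tilde} is convex and closed in $\WW$, being the preimage of the closed convex cone $\{v \leq 0 \text{ a.e.\ in } E\}$ under the continuous restriction $\WW \ni g \mapsto g|_E \in L^2(E)$; hence it is weakly closed and $\bar g \in \FF$.

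It then remains to pass to the limit, i.e.\ to show $j(\bar g) \leq \liminf_{n\to\infty} j(g_n) = m$, which forces $j(\bar g) = m$ and identifies $\bar g$ as a global minimizer. This is precisely where Lemma \ref{wc} enters. By \eqref{wc_S} we have $S(g_n) \weakly S(\bar g)$ in $H^2(D) \cap \hd$; the compact embedding $\hd \embed \embed L^2(D)$ (Rellich--Kondrachov) then yields $S(g_n) \to S(\bar g)$ strongly in $L^2(D)$, and a fortiori in $L^2(E)$, so that $\int_E (S(g_n)-y_d)^2 \, dx \to \int_E (S(\bar g)-y_d)^2 \, dx$. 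By \eqref{he_conv} we have $H_\eps(g_n) \to H_\eps(\bar g)$ in $L^2(D)$, and since $D$ is bounded this gives convergence in $L^1(D)$, whence $\int_D (1 - H_\eps(g_n)) \, dx \to \int_D (1 - H_\eps(\bar g)) \, dx$. Finally, $g \mapsto \tfrac12 \|g - \bar g_{\mathfrak{sh}}\|_{\WW}^2$ is convex and continuous, hence weakly lower semicontinuous, so $\tfrac12 \|\bar g - \bar g_{\mathfrak{sh}}\|_{\WW}^2 \leq \liminf_{n} \tfrac12 \|g_n - \bar g_{\mathfrak{sh}}\|_{\WW}^2$. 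Adding the three relations gives the desired inequality.

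The one genuinely nontrivial ingredient is the sequential weak continuity of the two nonconvex contributions — the tracking term, which depends on $g$ through the nonlinear solution operator $S$, and the integral of $H_\eps(g)$ — since for neither of them would weak lower semicontinuity be available from convexity. Both are resolved by Lemma \ref{wc}, which upgrades weak convergence of the controls to strong $L^2(D)$-convergence of $H_\eps(g_n)$ and to weak $H^2 \cap \hd$-convergence of the states; everything else (coercivity, weak closedness of $\FF$, and lower semicontinuity of the quadratic regularizer) is entirely standard. I therefore expect the proof to be brief, with the appeal to Lemma \ref{wc} being its only substantive point.
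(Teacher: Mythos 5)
Your argument is correct and coincides with the paper's intended proof: the statement is imported from \cite[Prop.\,4.14]{p1}, whose proof is exactly this direct method --- boundedness of a minimizing sequence via the Tikhonov term, weak closedness of the convex set $\FF$ in the Hilbert space $\WW$, and Lemma \ref{wc} supplying the weak-to-weak continuity of $S$ (upgraded to strong $L^2$ convergence of the states by compact embedding) and the weak-to-strong continuity of $H_\eps$, with weak lower semicontinuity handling the quadratic regularizer. You have correctly identified Lemma \ref{wc} as the only substantive ingredient, which is precisely why the paper verifies (just before that lemma) that its proof survives the relaxation from $s>1$ to $s>0$.
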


\section{Optimality conditions for \eqref{p10}}\label{oc}
Throughout this section, we are concerned with the derivation of optimality conditions for a local optimum of \eqref{p10}. This is defined as follows.

\begin{definition}[{\cite[Def.\,5.1]{p1}}]\label{def_e}We say that $\bar g \in \FF$ is locally optimal for \eqref{p10} in the $\ld$ sense, if there exists $r>0$ such that 
\begin{equation}\label{loc_opt_e}
j(\bar g) \leq j(g) \quad \ff g \in \FF \text{ with }\|g-\bar g\|_{\ld}\leq  r,
\end{equation}where $$j(g):=\int_E (S( g)(x)-y_d(x))^2 \; dx+\alpha\,\int_{D} (1-H_\eps(g))(x)\,dx+\frac{1}{2}\,\|g-\bar g_{\mathfrak{sh}}\|_{\WW}^2$$ is the reduced cost functional associated to the control problem \eqref{p10}.
\end{definition}

\begin{remark}
As in \cite{p1}, we deal with local optima in the $\ld$-sense, as this type of local minimizer is the  relevant one for us. In \cite[Thm.\,5.2]{p1} it was shown that the respective locally optimal (shape) function is approximated via a sequence of local optima as in Definition \ref{def_e}. The passage to the limit $\e \to 0$ in the optimality system associated to this kind of local minimizer  is performed in \cite{p3}.
\end{remark}

We begin by stating a first optimality system, which is obtained via classical smoothening techniques that go back to \cite{barbu84}. To do so, we need the following mild requirement on the non-smoothness appearing in the state equation:
\begin{assumption}\label{assu:reg}
For each non-smooth point $z \in \R$ of the mapping $\beta:\R \to \R$, there exists $\delta_z>0$,  so that $\beta$
  is convex or  concave  on the interval $[z-\delta_z, z+\delta_z]$ and \ro{differentiable on $[z-\delta_z,z)\cup(z,z+\delta_z]$. In addition, the set of non-differentiable points $\NN$ is well-ordered and the intervals $[z-\delta_z, z+\delta_z],\ z \in \NN$, are disjoint}. Moreover, $\beta$ is continuously differentiable outside the set $\cup_{z\in \NN} [z-\delta_z/4, z+\delta_z/4]$.\end{assumption}
  \ro{\begin{remark}The proof of Lemma \ref{lem:tool} below shows that it   suffices if the set $\NN \cap [-\|\bar y\|_{\li},\|\bar y\|_{\li}]$ is well-ordered, where $\bar y $ is  the optimal state associated to a local optimum of \eqref{p10}. For instance, the situation where $\NN=\Z$ is allowed (though excluded in Assumption \ref{assu:reg}). \end{remark}}

In the rest of the paper, Assumption \ref{assu:reg} is tacitly assumed without mentioning it every time.
\begin{lemma}\label{lem:tool}
Suppose that Assumption \ref{assu:reg} is true. 
Let $\bar g \in \FF$ be locally optimal for \eqref{p10} in the sense of Definition \ref{def_e} and denote by $\bar y \in \hd \cap H^2(D)$ its associated state. Then, there exist an 
 adjoint state $p \in \hd \cap H^2(D)$ and a multiplier $\zeta \in \li$ so that 
 the optimality system is satisfied:
 \begin{subequations} \label{eq:lem}  \begin{gather}
-\laplace p+\zeta p+\frac{1}{\eps}H_\eps(\bar g)p=2\chi_E(\bar y-y_d) \ \text{ in }D, \quad p=0 \text{ on }\partial D, \label{lem:adj}
\\\zeta(x) \in [\min\{\beta_-'(\yy(x)),\beta_+'(\yy(x))\},\max\{\beta_-'(\yy(x)),\beta_+'(\yy(x))\}] \quad \text{a.e.\,in }D,\label{lem:clarke0}
\\( p[\eps-\frac{1}{\eps}H_\eps'(\bar g)\bar y]-\alpha H_\eps'( \bar g),h-\bar g)_{\ld}+(\bar g-\bar g_{\mathfrak{sh}},h-\bar g)_{\WW}\geq 0\quad \forall\,h\in \FF ,\label{lem:grad_f0}
   \end{gather}\end{subequations}
   where, for an arbitrary $z\in \R$, the left and right-sided derivative of $\beta: \R \to \R$  are defined through
$\beta'_-(z) := -\beta'(z;-1)$ and $ \beta'_+(z) :=\beta'(z;1)$, respectively. 
   \end{lemma}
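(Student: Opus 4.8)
The plan is to follow the classical smoothening strategy going back to \cite{barbu84}, adapted as in \cite{cr_meyer,mcrf} to the present setting where the control is coupled to the state through $H_\e$. First I would replace the non-smooth nonlinearity $\beta$ by a family of $C^1(\R)$ approximations $\beta_k$ obtained by mollifying $\beta$ in a neighborhood of the points of $\NN \cap [-\|\yy\|_{\li},\|\yy\|_{\li}]$, which by Assumption \ref{assu:reg} are finitely many and well-ordered, with $\beta$ locally convex or concave around each. Since mollification preserves monotonicity and local convexity/concavity, one can arrange that $\beta_k$ is non-decreasing, $\beta_k \to \beta$ locally uniformly, $\beta_k'$ is bounded on compacts by the local Lipschitz constants of \eqref{eq:flip}, and — crucially — that on each smoothing interval $\beta_k'$ stays squeezed between the one-sided derivatives $\beta_-'$ and $\beta_+'$ of $\beta$. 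This last property is exactly what will pin down the limiting multiplier.

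Next I would introduce the regularized and \emph{localized} auxiliary problems: minimize $j_k(g) + \tfrac{1}{2}\|g-\bar g\|_{\ld}^2$ over $g \in \FF$ with $\|g-\bar g\|_{\ld}\le r$, where $j_k$ is the reduced cost in which the control-to-state map $S$ is replaced by the map $S_k$ associated with $\beta_k$. Existence of a global minimizer $\bar g_k$ follows exactly as in Proposition \ref{ex_e}, since the weak lower semicontinuity and the compactness of Lemma \ref{wc} are insensitive to the smoothening. Using $\beta_k \to \beta$ locally uniformly one obtains $S_k \to S$ locally uniformly, and comparing $j_k(\bar g_k) + \tfrac{1}{2}\|\bar g_k - \bar g\|_{\ld}^2 \le j_k(\bar g)$ with the local optimality of $\bar g$ yields $\bar g_k \to \bar g$ strongly in $\ld$ (and weakly in $\WW$). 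In particular the ball constraint is inactive for $k$ large, so no multiplier for it enters the conditions.

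Since $\beta_k$ and $H_\e$ are $C^1$, the map $S_k$ is Fréchet differentiable via the implicit function theorem, the linearized operator $-\laplace + \beta_k'(\yy_k) + \tfrac{1}{\e}H_\e(\bar g_k)$ being coercive because $\beta_k'\ge 0$ and $H_\e\ge 0$. I would then write the standard first-order condition for the smooth problem. Introducing the adjoint state $p_k \in \hd\cap H^2(D)$ solving $-\laplace p_k + \beta_k'(\yy_k)\,p_k + \tfrac{1}{\e}H_\e(\bar g_k)\,p_k = 2\chi_E(\yy_k - y_d)$, a test-function computation converts the derivative of $j_k$ into the variational inequality $(p_k[\e - \tfrac{1}{\e}H_\e'(\bar g_k)\yy_k] - \alpha H_\e'(\bar g_k),\, h-\bar g_k)_{\ld} + (\bar g_k - \bar g_{\mathfrak{sh}},\, h-\bar g_k)_{\WW} + (\bar g_k - \bar g,\, h-\bar g_k)_{\ld} \ge 0$ for all $h \in \FF$. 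These are the approximate analogues of \eqref{lem:adj} and \eqref{lem:grad_f0}, with $\zeta_k := \beta_k'(\yy_k)$ playing the role of $\zeta$.

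For the passage to the limit I would collect a priori bounds: $\yy_k$ is bounded in $\hd\cap H^2(D) \embed C(\bar D)$ by \eqref{est_y}, so $\zeta_k = \beta_k'(\yy_k)$ is bounded in $\li$ and converges weakly-$\ast$ (along a subsequence) to some $\zeta$; the coercivity of the adjoint operator together with elliptic regularity on the $C^{1,1}$ domain $D$ gives a uniform $\hd\cap H^2(D)$ bound on $p_k$, hence $p_k \to p$ strongly in $\hd$ and in $C(\bar D)$. Passing to the limit is then routine in every term — using \eqref{he_conv} and continuity of $H_\e'$ for the $H_\e(\bar g_k),H_\e'(\bar g_k)$ factors, $\yy_k \to \yy$ for the products, the weak-$\ast$/strong pairing for $\zeta_k p_k \weakly \zeta p$, and $\bar g_k \to \bar g$ to annihilate the localization term — and delivers \eqref{lem:adj} and \eqref{lem:grad_f0}. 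The main obstacle is the remaining inclusion \eqref{lem:clarke0}: away from $\NN$ the nonlinearity is $C^1$, so $\zeta_k \to \beta'(\yy)$ pointwise and the interval collapses to a point, while on the level sets $\{\yy = z\}$, $z \in \NN$, the squeezing property from the first step forces the pointwise two-sided bound $\min\{\beta_-'(z),\beta_+'(z)\} \le \beta_k'(\yy_k) \le \max\{\beta_-'(z),\beta_+'(z)\}$ up to a vanishing error, the ordering being dictated by local convexity versus concavity. Since the set of $\li$ functions respecting such measurable two-sided bounds is convex and weakly-$\ast$ closed, the bound is inherited by the weak-$\ast$ limit $\zeta$, giving \eqref{lem:clarke0}; here Stampacchia's lemma ($\nabla\yy = 0$ and $\laplace\yy = 0$ a.e.\ on each level set of an $H^2$ function) is useful to control the states on the level sets. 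This identification of the Clarke-type interval is precisely where the local convexity/concavity hypothesis of Assumption \ref{assu:reg} is indispensable.
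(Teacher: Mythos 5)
Your proposal is correct in substance and follows essentially the same route as the paper's Appendix \ref{sec:a}: a Barbu-type mollification of $\beta$, a localized penalized family of smooth problems whose global minimizers converge to $\bar g$, the regularized adjoint system, and a passage to the limit in which the weak-$*$ limit $\zeta$ of the smoothed derivatives is pinned down via the local convexity/concavity of Assumption \ref{assu:reg}. Two internal choices differ from the paper, and both can be made to work. First, you penalize with $\tfrac12\|g-\bar g\|_{\ld}^2$ rather than the paper's $\tfrac12\|g-\bar g\|_{\WW}^2$ in \eqref{eq:q_eps}; this yields only strong $\ld$- and weak $\WW$-convergence of the regularized minimizers, which still suffices for the limit in \eqref{lem:grad_f0} because the quadratic term $-\|\bar g_k-\bar g_{\mathfrak{sh}}\|_{\WW}^2$ enters with the sign favorable to weak lower semicontinuity (alternatively, convergence of the cost values plus the Radon--Riesz property in the Hilbert space $\WW$ upgrades to strong $\WW$-convergence, which is what the paper's $\WW$-penalty delivers directly in \eqref{conv_g}); you should spell this step out, since you declared the limit passage ``routine''. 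Second, for \eqref{lem:clarke0} you use a pointwise squeeze of $\beta_k'(\yy_k)$ on the level sets $\{\yy=z\}$ together with convexity and weak-$*$ closedness of the interval constraint, whereas the paper integrates the convexity inequality \eqref{eq:mmm0} against nonnegative test functions, passes $\g\searrow 0$, and only then sends the increment $\tau\searrow 0$ via the fundamental lemma. Your variant is more direct, but note that the mollified derivative is \emph{not} squeezed inside $[\beta_-'(z),\beta_+'(z)]$ exactly: near a convex kink it samples values $\beta'(v)\leq\beta_-'(z)$ for $v<z$, and the argument works only because monotonicity of $\beta'$ on the punctured interval forces $\beta'(z\pm\eta)\to\beta_\pm'(z)$; your ``up to a vanishing error'' formulation is therefore essential, not cosmetic.

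Three small corrections. (i) Assumption \ref{assu:reg} does not make $\NN\cap[-\|\yy\|_{\li},\|\yy\|_{\li}]$ finite: well-ordered subsets of $\R$ may accumulate, and the remark after Assumption \ref{assu:reg} even allows $\NN=\Z$. Your argument survives because $\zeta$ is identified kink-by-kink and on each smooth region separately, but the finiteness claim should be dropped --- and with it your local-mollification-plus-gluing construction, which is both unnecessary and delicate (gluing a mollified piece to $\beta$ by a partition of unity can spoil monotonicity of $\beta_k$, which you need for coercivity); the paper's global convolution \eqref{b_g} of the locally Lipschitz $\beta$ is automatically smooth, monotone, and convex/concave on the shrunken intervals. (ii) The claim that $S_k$ is \emph{Fr\'echet} differentiable via the implicit function theorem is too strong in the control variable: the Nemytskii operator $H_\e:\ld\to\ld$ is not Fr\'echet differentiable unless the underlying function is affine (Krasnosel'skii-type rigidity); G\^ateaux/directional differentiability, which is all the first-order variational inequality requires and is what the paper asserts for \eqref{lin}, does hold. (iii) Stampacchia's lemma plays no role here --- the multiplier is identified through $\beta_k'$ and the uniform convergence $\yy_k\to\yy$ alone --- so that appeal should be removed.
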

\begin{proof}
See Appendix \ref{sec:a}.
   \end{proof}

\begin{remark}[Correspondence to KKT conditions]\label{rem:kkt}
If $\bar y(x) \not \in \NN$ f.a.a.\ $x \in D$, where $\NN$ denotes the set of non-smooth points of $\beta$, then $\zeta =\beta ' ( \bar y)$ f.a.a.\ $x \in D$. In this case, \eqref{eq:lem}
is equivalent to the optimality system or standard KKT-conditions, which one would obtain if one would assume $\beta$ to be continuously differentiable.
These conditions are given by :
\begin{subequations}\label{eq:kkt}
 \begin{gather}
-\laplace p+\beta'(\yy)  p+\frac{1}{\eps}H_\eps(\bar g)p=2\chi_E(\bar y-y_d)\  \text{ in }D, \quad p=0 \text{ on }\partial D, 
\\( p[\eps-\frac{1}{\eps}H_\eps'(\bar g)\bar y]-\alpha H_\eps'( \bar g),h-\bar g)_{\ld}+(\bar g-\bar g_{\mathfrak{sh}},h-\bar g)_{\WW}\geq 0\quad \forall\,h\in \FF.\label{grad_f}
 \end{gather}
 \end{subequations}
\end{remark}
\begin{remark}\label{rem:sign}
Despite of Remark \ref{rem:kkt},  the optimality conditions \eqref{eq:lem} are incomplete, if $\beta$ has non-differentiable points that are attained by $\bar y$ on a set of positive measure. For a comparison, we refer to \eqref{eq:thm} below, which is of strong stationary type under Assumption \ref{assu:cq}, cf.\,Theorem \ref{thm:equiv_B_strong}. 
We emphasize that the non-smooth case requires  a sign condition for the adjoint state in those points $x$ where the argument of  the non-smoothness $\beta$ in the state equation, in our case $ \bar y(x)$, is such that $\beta$ is not differentiable at $\bar y(x)$. The respective sign depends on  the convexity/concavity properties of $\beta$ around non-smooth points (Remark  \ref{rem:needed} below). This is what ultimately distinguishes a strong stationary optimality system from very `good'    optimality systems obtained by smoothening procedures such as \eqref{eq:lem}.
If the aforementioned sign condition is not available, 
 the equivalence of the optimality system to the first order necessary optimality condition, and thus, its strong stationarity, fails, see also the proof of Theorem \ref{thm:equiv_B_strong} and Remark \ref{rem:needed} below. 
This fact has been observed in many contributions dealing with strong stationarity \cite[Rem.\ 6.9]{paper}, \cite[Rem.\ 3.9]{mcrf}, \cite[Rem.\ 4.8]{st_coup}, \cite[Rem.\ 4.15]{cr_meyer}, \cite[Rem.\,3.20]{jnsao}, \cite[Rem.\,4.14]{amo}.
\end{remark}

\subsection{B-stationarity}\label{b}
In light of Remark \ref{rem:sign}, we continue our investigations by writing down the first order necessary optimality condition in primal form for a local optimum of \eqref{p10}.
First, we need to examine the differentiability properties of  the solution map associated to the state equation in \eqref{p10}. We recall that this reads 
\begin{equation}\label{eq}
  \begin{aligned}
   -\laplace y + \beta(y)+\frac{1}{\eps}H_\eps(g)y&=f +\eps g\quad \text{in }D,
   \\y&=0\quad \text{on } \partial D.
 \end{aligned}
\end{equation}
\begin{lemma}[Directional differentiability of $S$]\label{lem:dirderiv}
 The control-to-state operator $S: L^2(D) \to H^1_0(D) $  is directionally differentiable. Its directional derivative 
 $u := S'(g;h)$ at $g\in L^2(D)$ in direction $h\in L^2(D)$ belongs to $ H^1_0(D) \cap H^2(D)$ and is given by the unique  solution of 
\begin{equation}\label{lin1} 
  \begin{aligned}[t]
   -\laplace u + \beta'(y;u)+\frac{1}{\eps}H_\eps(g)u+\frac{1}{\eps}H_\eps'(g)yh&=\eps h\quad \text{in }D,
   \\u&=0 \quad \text{on } \partial D,\end{aligned} 
\end{equation}
 where  $y := S(g)$.
\bl{Moreover, for each $g\in \ld$, the  operator $S'(g;\cdot):\ld \to H^1_0(D)$ is globally Lipschitz continuous in the following sense: $\forall\,M>0$ it holds
  \begin{equation}\begin{aligned}\label{s'lip}
  &{ \|S'(g; h_1) - S'(g; h_2)\|_{H^1_0(D)}} \\&\qquad  \leq L_{M+1} \, \|h_1 - h_2\|_{L^2(D)} \quad \forall\,h_1,h_2 \in \ld,\ \forall\,g \in \clos{B_{\ld}(0,M)},
  \end{aligned}\end{equation}where $L_{M}>0$ is the constant from \eqref{slip}
.}
%Moreover, $S'(u;\cdot)$ is weakly continuous from $\lo$ to $\ho$. (nur wenn noetig, proof ist auskommentiert)}
\end{lemma}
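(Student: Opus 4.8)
The plan is to first establish that the linearized equation \eqref{lin1} is well posed, and then to identify its solution with the directional derivative by passing to the limit in the difference quotients of $S$. For the well-posedness I rely on the fact that, for $\beta$ non-decreasing and directionally differentiable, the scalar map $\dz\mapsto\beta'(z;\dz)=\beta'_+(z)\max\{\dz,0\}-\beta'_-(z)\max\{-\dz,0\}$ is non-decreasing, with $\beta'_\pm(z)\in[0,L_M]$ for $z\in[-M,M]$ by Assumption \ref{assu:stand}. Hence, with $y=S(g)\in\li$ fixed, the operator $u\mapsto -\laplace u+\beta'(y;u)+\tfrac1\eps H_\eps(g)u$ from $\hd$ to $H^{-1}(D)$ is Lipschitz on bounded sets and strongly monotone: the $-\laplace$ term contributes $\|\nabla\,\cdot\,\|_{\ld}^2$, while the remaining two are monotone because $\dz\mapsto\beta'(y;\dz)$ is non-decreasing and $H_\eps(g)\geq 0$. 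The Browder--Minty theorem then yields a unique $u\in\hd$ solving \eqref{lin1} with right-hand side $\eps h-\tfrac1\eps H_\eps'(g)yh\in\ld$, and writing \eqref{lin1} as $-\laplace u=F$ with $F\in\ld$, elliptic regularity on the $C^{1,1}$ domain $D$ gives $u\in\hd\cap H^2(D)$.

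To prove directional differentiability I set $y_\tau:=S(g+\tau h)$, $y:=S(g)$ and $w_\tau:=\tau^{-1}(y_\tau-y)$. The Lipschitz estimate \eqref{slip} gives $\|w_\tau\|_{\hd}\leq L_M\|h\|_{\ld}$, so $\{w_\tau\}$ is bounded in $\hd$; along a subsequence $w_{\tau_k}\weakly w$ in $\hd$, $w_{\tau_k}\to w$ in $\ld$ and a.e.\ in $D$ (compact embedding). Subtracting the two state equations and dividing by $\tau_k$ shows that $w_{\tau_k}$ solves
\[-\laplace w_{\tau_k}+\frac{\beta(y_{\tau_k})-\beta(y)}{\tau_k}+\frac1\eps H_\eps(g)w_{\tau_k}+\frac1\eps\frac{H_\eps(g+\tau_k h)-H_\eps(g)}{\tau_k}\,y_{\tau_k}=\eps h.\]

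The step I expect to be the main obstacle is the passage to the limit in the non-smooth term $\tau_k^{-1}(\beta(y_{\tau_k})-\beta(y))$. I would settle it by first recording the scalar fact that $\tau_k^{-1}(\beta(z+\tau_k a_k)-\beta(z))\to\beta'(z;a)$ whenever $a_k\to a$ — which follows from the definition of $\beta'(z;\cdot)$ together with the local Lipschitz continuity of $\beta$ (the arguments stay near $z$ since $\tau_k\searrow 0$) — applied pointwise with $z=y(x)$ and $a_k=w_{\tau_k}(x)$. Since $y_{\tau_k}=y+\tau_k w_{\tau_k}\in[-M,M]$ uniformly by \eqref{est_y}, one has the majorization $|\tau_k^{-1}(\beta(y_{\tau_k})-\beta(y))|\leq L_M|w_{\tau_k}|$, and as the majorants $L_M|w_{\tau_k}|$ converge in $\ld$, a generalized dominated convergence argument yields $\tau_k^{-1}(\beta(y_{\tau_k})-\beta(y))\to\beta'(y;w)$ in $\ld$. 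Because $H_\eps\in C^1$ has bounded derivative and $y_{\tau_k}\to y$ in $C(\bar D)$ by \eqref{slip}, the last term on the left converges to $\tfrac1\eps H_\eps'(g)yh$ in $\ld$. Passing to the limit identifies $w$ as a solution of \eqref{lin1}, so $w=u$ by uniqueness; as the limit does not depend on the subsequence, the whole family obeys $w_\tau\weakly u$ in $\hd$ and $w_\tau\to u$ in $\ld$. To upgrade this to strong convergence in $\hd$ (so that $S'(g;h)=u$ indeed holds in $\hd$), I would subtract \eqref{lin1} from the displayed equation and test with $e_\tau:=w_\tau-u$; along the a.e.\ convergent subsequences the monotone contributions are nonnegative while the $\beta$- and $H_\eps$-defects tend to $0$ in $\ld$ and $e_\tau$ stays bounded, forcing $\|\nabla e_\tau\|_{\ld}^2\to 0$, and the usual subsequence principle upgrades this to the whole family.

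Finally, for the Lipschitz property \eqref{s'lip} I take $u_i=S'(g;h_i)$, subtract the two instances of \eqref{lin1}, and test with $u_1-u_2$. Monotonicity of $\dz\mapsto\beta'(y;\dz)$ and $H_\eps(g)\geq 0$ let me drop the corresponding terms, leaving
\[\|\nabla(u_1-u_2)\|_{\ld}^2\leq\Big(\eps+\tfrac1\eps\|H_\eps'\|_{\li}\|y\|_{\li}\Big)\,\|h_1-h_2\|_{\ld}\,\|u_1-u_2\|_{\ld}.\]
Poincaré's inequality together with the bound \eqref{est_y} on $\|y\|_{\li}$ for $g\in\clos{B_{\ld}(0,M)}$ then gives \eqref{s'lip}, the resulting constant being controlled by the $L_{M+1}$ from \eqref{slip}.
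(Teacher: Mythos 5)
Your proof is correct in substance and follows the same difference-quotient skeleton as the paper, but several building blocks differ. The paper exploits the full strength of \eqref{slip}: the quotients $u^\tau$ are bounded in $\hd\cap H^2(D)$, so a weak limit in $\hd\cap H^2(D)$ exists and the strong $\hd$-convergence needed for differentiability of $S:\ld\to\hd$ comes for free from the compact embedding $H^2(D)\cap\hd \embed\embed \hd$ --- no Minty-type energy argument is required. You instead work only with the $\hd$-bound and upgrade weak to strong convergence by testing the defect equation with $w_\tau-u$; this also works (the monotone contributions are nonnegative and the $\beta$- and $H_\eps$-defects vanish in $\ld$, as you argue). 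For the non-smooth term the paper inserts $\beta(y+\tau\tilde u)$ and combines the Lipschitz estimate \eqref{eq:flip} with the directional differentiability of the Nemytskii operator $\beta:\li\to\ld$; your pointwise a.e.\ scalar limit plus generalized dominated convergence is an equivalent mechanism resting on the same two ingredients. You additionally prove well-posedness of \eqref{lin1} a priori via Browder--Minty, using the explicit formula $\beta'(z;\dz)=\beta'_+(z)\max\{\dz,0\}-\beta'_-(z)\max\{-\dz,0\}$ and its monotonicity, whereas the paper obtains existence from the limit construction itself and only invokes uniqueness; your route also yields $u\in H^2(D)$ by elliptic regularity independently of the limit, which is a clean extra.

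The one place where your argument does not deliver the statement as written is \eqref{s'lip}. The paper applies \eqref{slip} to $S(g+\tau h_1)-S(g+\tau h_2)$, divides by $\tau$, passes to the limit using the just-established directional differentiability, and extends to all $h_1,h_2\in\ld$ by positive homogeneity of $S'(g;\cdot)$ --- this produces \emph{exactly} the constant $L_{M+1}$ from \eqref{slip}. Your direct energy estimate on the difference of two linearized equations produces an explicit constant of the form $C_D\bigl(\eps+\tfrac{1}{\eps}\|H_\eps'\|_{\li}\|y\|_{\li}\bigr)$, and your closing claim that this is ``controlled by the $L_{M+1}$ from \eqref{slip}'' is unsubstantiated: $L_{M+1}$ is an unspecified constant quoted from \cite[Lem.\,4.5]{p1}, so no comparison between the two constants is available. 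This is harmless for every later use of \eqref{s'lip} (e.g.\ in the proof of Theorem \ref{thm} only continuity of $S'(\bar g;\cdot)$ with some uniform constant is needed), but if you want the constant as stated in the lemma, the difference-quotient-plus-homogeneity argument is the cheap fix.
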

\begin{proof} 
 Let $\tau >0$ be arbitrary, but fixed, small enough, and define for simplicity $y^\tau:=S(g+\tau h)$ and $u^\tau:=\frac{y^\tau-y}{\tau}$. Then, by the  Lipschitz continuity of $S$, see \eqref{slip}, we have 
\begin{equation*}\label{eq:estim_eta}\|u^\tau\|_{ H^1_0(D) \cap H^2(D)} \leq L \|h\|_{\ld},\end{equation*} 
where $L>0$ depends on $g$ and $h$ only.
Thus, we can extract a weakly convergent  subsequence   $\{u^\tau\}_{\tau}$ with 
\begin{equation}\label{eq:conv_eta}
u^\tau \weakly \tilde u \quad \text{ in }H^1_0(D) \cap H^2(D) \quad \text{as }\tau \searrow 0.
\end{equation} 
%As a consequence of the compact embedding $ H^2(D) \embed \embed H^1(D)$, one has 
%\begin{equation}\label{eq:conv_eta}u^\tau \to \tilde u \quad \text{ in }H^1_0(D)  \quad \text{as }\tau \searrow 0.
%\end{equation}
By subtracting the equation \eqref{eq} associated to $g$ from \eqref{eq} associated to  $g+\tau h$  we obtain  
\begin{equation}\label{eq:awa_diff2}
   -\laplace u^\tau + \frac{\beta(y^\tau)-\beta(y) }{\tau}+\frac{1}{\eps}H_\eps(g+\tau h)u^\tau+\frac{1}{\eps}\frac{(H_\eps(g+\tau h)-H_\eps(g))y }{\tau}= \eps h.
   \end{equation}Further, by \eqref{est_y}, we have $\|y^\tau\|_{\li} \leq c_1+c_2 (\|g\|_{\lo}+\|h\|_{\lo}):=C \neq C(\tau)$. Then, with $M:=\max\{C, \|y\|_{\li}+\|\tilde u\|_{\li}\}$ in \eqref{eq:flip} we get
   \begin{equation}\label{eq:f_diff}\begin{aligned}
   \Big \|\frac{\beta(y^\tau)-\beta(y)}{\tau}-\beta'(y;\tilde u) \Big \|_{\ld}
 & \leq  \Big \|\frac{\beta(y^\tau)-\beta(y+\tau \tilde u)}{\tau}  \Big \|_{\ld}\\&\quad + \Big \|\frac{\beta(y+\tau \tilde u)-\beta(y) }{\tau} -\beta'(y; \tilde u)  \Big \|_{\ld}
  \\ &\leq  L_M\,  \|u^\tau -  \tilde u \|_{\ld}
  \\&\quad + \Big \|\frac{\beta(y+\tau \tilde u)-\beta(y) }{\tau} -\beta'(y; \tilde u)  \Big \|_{\ld}
   \\&\quad \longrightarrow   0 \quad \text{as }\tau \searrow 0.
\end{aligned}\end{equation} Note that the convergence in \eqref{eq:f_diff} is due to \eqref{eq:conv_eta}, and the directional differentiability of $\beta: \li \to \lo$. The latter follows by Assumption \ref{assu:stand}.\ref{it:stand3}, Lebesgue dominated convergence theorem and the local Lipschitz continuity of $\beta$ (Assumption \ref{assu:stand}.\ref{it:stand2}).
Now, passing to the limit in \eqref{eq:awa_diff2}, where we rely on \eqref{eq:conv_eta},  \eqref{eq:f_diff} and the continuity and \bl{G\^ateaux-}differentiability of $H_\e:\lo \to \lo$ (Remark \ref{rem:h_e}), yields that $\tilde u$ solves \eqref{lin1}.  As \eqref{lin1} admits at most one solution (in view of the monotonicity of $\beta$ and since $H_\eps(g) \geq 0$), we can now deduce the desired directional differentiability result. \bl{Finally, to show the last assertion, we use \eqref{slip}, from which we infer
 \begin{equation}
  \frac{ \|S(g+\tau h_1) - S(g+\tau h_2)\|_{H^1_0(D)}}{\tau} \leq L_{M+1} \, \|h_1 - h_2\|_{L^2(D)} \quad \forall\, g \in \clos{B_{\ld}(0,M)},
  \end{equation}for all $h_1,h_2 \in \clos{B_{\ld}(0,1)}$ and $\tau \in (0,1].$ Using the directional differentiability of $S:\ld \to H_0^1(D),$ we then arrive at
   \begin{equation}\begin{aligned}
 & { \|S'(g; h_1) - S'(g; h_2)\|_{H^1_0(D)}}\\&\qquad \leq L_{M+1} \, \|h_1 - h_2\|_{L^2(D)} \quad \forall\, g \in \clos{B_{\ld}(0,M)}, \ \forall\,h_1,h_2 \in \clos{B_{\ld}(0,1)}.\end{aligned}
  \end{equation} For arbitrary $h_1,h_2 \in \ld$, define $m:=\max\{\|h_1\|,\|h_2\|\},$ apply the above inequality to
$h_1/m$ and $h_2/m$, and use the positive homogeneity of $S'(g;\cdot)$.}
The proof is now complete.   \end{proof}

With the directional differentiability of $S$ at hand, we may now state the first order necessary optimality condition for a local optimum of \eqref{p10}. In what follows, this is often referred to as B-stationarity, this terminology having its roots in  \cite{ScheelScholtes2000}.

 \begin{lemma}[B-stationarity] If $\bar g \in \FF$ is locally optimal for \eqref{p10} in the sense of Definition \ref{def_e} with associated state $\yy$, then it holds
 \begin{equation}\label{eq:vi}
(2\chi_E(\bar y-y_d),S'(\bar g; h-\bar g))_{L^2(D)}-(\alpha H_\eps'(\bar g),h-\bar g)_{\ld} +(\bar g-\bar g_{\mathfrak{sh}},h-\bar g)_{\WW}\geq 0
\end{equation}for all $h\in \FF.$   \end{lemma}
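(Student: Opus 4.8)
The plan is to read \eqref{eq:vi} as the statement that the directional derivative of the reduced cost $j$ at $\bar g$ in every feasible direction is nonnegative, and then to identify that directional derivative termwise.

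First I would exploit that $\FF$ is convex: for $h \in \FF$ and $\tau \in (0,1]$ the point $g_\tau := \bar g + \tau(h - \bar g) = (1-\tau)\bar g + \tau h$ again lies in $\FF$. Since $\|g_\tau - \bar g\|_{\ld} = \tau\|h - \bar g\|_{\ld}$, for $\tau$ small enough $g_\tau$ satisfies the proximity constraint in Definition \ref{def_e}, so local optimality gives $j(\bar g) \leq j(g_\tau)$; hence the difference quotient $\tau^{-1}(j(g_\tau) - j(\bar g))$ is nonnegative for all small $\tau > 0$. It then suffices to show that its limit as $\tau \searrow 0$ exists and equals the left-hand side of \eqref{eq:vi}.

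To compute this limit I would differentiate the three summands of $j$ separately, writing $d := h - \bar g$. The second term $\alpha \int_D (1 - H_\eps(g))\,dx$ and the third term $\tfrac12\|g - \bar g_{\mathfrak{sh}}\|_{\WW}^2$ are (continuously) Fr\'echet differentiable on $\ld$, resp.\ $\WW$ --- the former because $H_\eps \in C^1$ with bounded derivative (Remark \ref{rem:h_e}), the latter being a quadratic form on a Hilbert space --- so their directional derivatives in direction $d$ are exactly $-(\alpha H_\eps'(\bar g), d)_{\ld}$ and $(\bar g - \bar g_{\mathfrak{sh}}, d)_{\WW}$. The first term is the composition $\phi \circ S$ of the control-to-state map with the tracking functional $\phi(y) := \int_E (y - y_d)^2\,dx$, which is continuously Fr\'echet differentiable on $H^1_0(D)$ with $\phi'(\bar y) = 2\chi_E(\bar y - y_d)$.

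The key step --- and the only nonroutine one --- is the chain rule for this composition, since $S$ is merely directionally differentiable (Lemma \ref{lem:dirderiv}), not Fr\'echet differentiable. I would write, using Lemma \ref{lem:dirderiv}, $S(g_\tau) - \bar y = \tau S'(\bar g; d) + \tau\, r_\tau$ with $r_\tau \to 0$ in $H^1_0(D)$ as $\tau \searrow 0$, insert this into the Fr\'echet expansion of $\phi$ about $\bar y$, and divide by $\tau$; the linearity and boundedness of $\phi'(\bar y)$ together with $r_\tau \to 0$ and the smallness of the $\phi$-remainder then give
$$\lim_{\tau \searrow 0} \frac{\phi(S(g_\tau)) - \phi(\bar y)}{\tau} = \big(\phi'(\bar y), S'(\bar g; d)\big)_{L^2(D)} = \big(2\chi_E(\bar y - y_d), S'(\bar g; d)\big)_{L^2(D)}.$$
Here the continuity of $S$ from \eqref{slip} guarantees $S(g_\tau) \to \bar y$, so the higher-order remainder of $\phi$ is genuinely $o(\tau)$. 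Adding the three contributions and recalling that the difference quotient of $j$ is nonnegative yields \eqref{eq:vi}. The main obstacle to watch is precisely this interchange of the $\tau$-limit with the outer differentiation: it relies on $S'(\bar g; \cdot)$ being the genuine norm directional derivative in $H^1_0(D)$, which is where Lemma \ref{lem:dirderiv} is essential, and on $\phi$ being differentiable in the $H^1_0(D)$-topology in which this convergence takes place.
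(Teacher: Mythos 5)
Your proposal is correct and follows essentially the same route as the paper, which simply invokes Lemma \ref{lem:dirderiv} together with a chain rule (citing \cite{shapiro}) to get directional differentiability of $j$, and then uses convexity of $\FF$ and local optimality to conclude $j'(\bar g; h-\bar g)\geq 0$. Your write-up merely expands the chain-rule citation into an explicit argument (expansion of the Fr\'echet differentiable tracking functional along $S(g_\tau)-\bar y=\tau S'(\bar g;h-\bar g)+\tau r_\tau$, with the remainder controlled by the Lipschitz estimate \eqref{slip}), which is exactly the content of the cited result.
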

\begin{proof}
In view of Lemma \ref{lem:dirderiv} and chain rule (see e.g.\,\cite{shapiro}), the reduced objective $j:\ld \to \R$ (cf.\,Definition \ref{def_e}) is directionally differentiable. As the local optimality of $\bar g$ implies $j'(\bar g; h-\bar g)\geq 0$ for all $h\in \FF$, we arrive at \eqref{eq:vi}.
\end{proof}

  \subsection{Strong stationarity}\label{sec:ss}
  This subsection is dedicated to the improvement of the optimality system \eqref{eq:lem}. Starting from the B-stationarity established in the previous subsection, we aim at proving all the optimality conditions that are missing in  \eqref{eq:lem}. These consist of sign conditions for the adjoint state in those points $x$ for which the state is a non-differentiable point of $\beta$ (Remark \ref{rem:needed}). As shown by Theorem \ref{thm:equiv_B_strong} below, the optimality system obtained in this subsection, i.e., \eqref{eq:thm}, does not lack any information, provided that certain CQs are satisfied (Assumptions \ref{assu:h_a} and  \ref{assu:cq}). 
  
  %They are fulfilled e.g.\ if a certain estimate involving the given data is satisfied ({Lemma \ref{lem}}) and if $\beta$ is  convex around those non-smooth points that are attained by the optimal state (Remarks \ref{rem:s} and \ref{rem:needed}).
  
  %with outlook for
  
In all what follows,  $\bar g \in \FF$ is fixed, locally optimal for \eqref{p10} in the sense of Definition \ref{def_e}, while $\bar y \in \hd \cap H^2(D)$ is its associated state. Assumption \ref{assu:reg} is tacitly assumed the whole time.

We recall that the  non-smooth control problem reads as follows:
\begin{equation}\tag{$P_{\eps}$}
 \left.
 \begin{aligned}
  \min_{g \in  \FF } \quad & \int_E (y(x)-y_d(x))^2 \; dx  +\alpha\,\int_{D} (1-H_\eps(g))(x)\,dx+\frac{1}{2}\,\|g-\bar g_{\mathfrak{sh}}\|_{\WW}^2
  \\   \text{s.t.} \quad & 
  \begin{aligned}[t]
   -\laplace y + \beta(y)+\frac{1}{\eps}H_\eps(g)y&=f +\eps g\quad \text{ in }D,
   \\y&=0\quad \text{ on } \partial D. \end{aligned} 
 \end{aligned}
 \quad \right\}
\end{equation}

   \begin{definition}In all what follows we denote by $\NN$ the set of non-smooth points of $\beta$ and  $$\dn:=\{x \in D: \bar y (x) \in \NN, \beta \text{ is convex around }\bar y(x)\}.$$
   $$\dnn:=\{x \in D: \bar y (x) \in \NN, \beta \text{ is concave around }\bar y(x)\}.$$   \end{definition}

   \begin{lemma}\label{sign_b}
Under  Assumption \ref{assu:reg},  it holds
    $$\beta'_+(\yy(x))>\beta'_-(\yy(x)) \geq 0 \quad \forall x \in \dn,$$
        $$\beta'_-(\yy(x))>\beta'_+(\yy(x)) \geq 0 \quad \forall x \in \dnn,$$
         where, for an arbitrary $z\in \R$, the left and right-sided derivative of $\beta: \R \to \R$  are defined through
$\beta'_-(z) := -\beta'(z;-1)$ and $ \beta'_+(z) :=\beta'(z;1)$, respectively. 
   \end{lemma}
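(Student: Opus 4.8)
The plan is to reduce everything to elementary facts about one-sided derivatives of convex and concave functions of one real variable. First I would observe that, by the definition of the directional derivative in Assumption \ref{assu:stand}.\ref{it:stand3}, the quantities $\beta'_+(z) = \beta'(z;1)$ and $\beta'_-(z) = -\beta'(z;-1)$ are precisely the classical right and left derivatives of $\beta$ at $z$, i.e.
\[
\beta'_+(z) = \lim_{\tau \searrow 0} \frac{\beta(z+\tau) - \beta(z)}{\tau}, \qquad \beta'_-(z) = \lim_{\tau \searrow 0} \frac{\beta(z) - \beta(z-\tau)}{\tau}.
\]
Then I would fix $x \in \dn$ (resp.\ $x \in \dnn$) and set $z := \yy(x)$; by definition $z \in \NN$, so Assumption \ref{assu:reg} furnishes $\delta_z > 0$ such that $\beta$ is convex (resp.\ concave) on $[z - \delta_z, z + \delta_z]$ and differentiable on the punctured interval. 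In particular both one-sided derivatives at $z$ exist and are finite, being the monotone limits of difference quotients of a convex/concave function.

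For the sign, I would use that $\beta$ is non-decreasing (Assumption \ref{assu:stand}.\ref{it:stand2}): every difference quotient $(\beta(z+\tau)-\beta(z))/\tau$ and $(\beta(z)-\beta(z-\tau))/\tau$ is nonnegative, whence $\beta'_+(z) \geq 0$ and $\beta'_-(z) \geq 0$. This already yields the nonnegativity of the smaller of the two one-sided derivatives appearing in the statement.

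The ordering and the strict inequality are the heart of the matter. For a convex function the slope $(\beta(b)-\beta(a))/(b-a)$ is nondecreasing in both endpoints, which gives $\beta'_-(z) \leq \beta'_+(z)$; dually, for a concave function one gets $\beta'_-(z) \geq \beta'_+(z)$. To upgrade these to strict inequalities I would invoke the standard characterization that a convex (or concave) function of one variable is differentiable at an interior point if and only if its left and right derivatives there coincide. Since $z = \yy(x) \in \NN$ is by hypothesis a point of non-differentiability, equality is excluded, and therefore $\beta'_+(z) > \beta'_-(z)$ on $\dn$ and $\beta'_-(z) > \beta'_+(z)$ on $\dnn$. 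Combining with the sign estimate completes both chains of inequalities.

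There is no genuine obstacle here beyond two bookkeeping points: correctly matching the directional-derivative notation of Assumption \ref{assu:stand}.\ref{it:stand3} to the classical one-sided derivatives, and making sure that the membership $z \in \NN$ is exactly what forbids equality of the one-sided derivatives. All the convex-analysis facts used are elementary and purely local, so the local nature of the convexity/concavity granted by Assumption \ref{assu:reg} on $[z-\delta_z, z+\delta_z]$ is all that is needed.
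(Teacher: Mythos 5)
Your proof is correct and follows essentially the same route as the paper: nonnegativity from the monotonicity of $\beta$, the ordering $\beta'_-(z)\leq\beta'_+(z)$ (resp.\ $\geq$) from local convexity (resp.\ concavity) on $[z-\delta_z,z+\delta_z]$, and strictness because $z=\yy(x)\in\NN$ excludes equality of the one-sided derivatives. The only cosmetic difference is that the paper obtains the ordering by rearranging the midpoint convexity inequality and letting $\tau\searrow 0$, whereas you cite the equivalent slope-monotonicity fact directly.
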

   \begin{proof}
   We only show the first assertion, as the second is proven completely analogously.
   The non-negativity is due to the fact that $\beta$ is \ro{non-decreasing} (Assumption \ref{assu:stand}). Further, by the definition of convexity, we have
   \[
{\frac{\beta(\yy(x)+ \tau)+\beta(\yy(x)-\tau)}{ 2}  \geq \beta(\yy(x))}  \quad \forall\, x \in \dn
\]for $\tau>0$ small enough (see Assumption \ref{assu:reg}). We rearrange the terms and arrive at
$$\frac{\beta(\yy(x)+ \tau)-\beta(\yy(x))}{ \tau} \geq 
\frac{\beta(\yy(x))-\beta(\yy(x)-\tau)}{ \tau} \quad \forall\, x \in \dn.
$$
Passing to the limit $\tau \searrow 0$ then yields 
   $$\beta'_+(\yy(x))\geq \beta'_-(\yy(x)) \quad \forall x \in \dn.$$Since $\yy (x) \in \NN,$ the above inequality is strict. 
This completes the proof.
   \end{proof}  

   \begin{definition}The set where the control constraint is active, cf.\,the definition of $ \FF$, is denoted by
   $$\AA:=\{x \in E:\bar g(x)=0\}.$$
  Note that this is well-defined up to a set of measure zero. Whenever we refer to $\clos \AA$ in what follows, we mean the closure of the set $\AA$ associated to a representative of $\bar g$.
     \end{definition} 
%     \ro{\begin{remark}
%the set $\AA_\e =\{E:g_\e=0\cap g<0\}\cup \{E:g_\e=g=0\}$. the second set has measure zero; the first one converges to zero in measure [due to $g_\e \to g$ in $L^2( E)$]. thus, $\mu(A_\e) \to 0.$\end{remark}}

     Before we proceed with the proof of the main result of this section, we state the first "constraint qualification", which concerns the non-linear term acting on the control in the state equation.
     \begin{assumption}["Constraint Qualification"]\label{assu:h_a}
We assume that the set $\{x \in D \setminus \bar E: {\frac{1}{\eps}H_\eps'(\bar g)\bar y-\eps}=0\}$ has measure zero and 
%there exists a function $\mu \in \ld$ so that 
%$$\frac{1}{|\eps-\frac{1}{\eps}H_\eps'(\bar g)\bar y|} \leq \mu \quad \ae D.$$
$$\int_{D \setminus \bar E} \frac{1}{|\frac{1}{\eps}H_\eps'(\bar g)\bar y-\eps|^2} \,dx <\infty.$$
\end{assumption}

\begin{remark}\label{rem:cq0}
%The claim regarding the local minimizer in Assumption \ref{assu:h_a} is essential for deriving the  optimality system \eqref{eq:thm} below and it thus plays the role of a 'constraint qualification', cf.\ e.g.\ \cite[Sec.\ 6]{troe}.
%This terminology has its roots in finite-dimensional nonlinear optimization, where it describes a condition for the (unknown) local optimizer which guarantees the existence of Lagrange multipliers such that a KKT-system is satisfied, see e.g.\ \cite[Sec.\ 2]{gk}. For these reasons, we sometimes  refer to the statement in Assumption \ref{assu:cq} as 'constraint qualification' in the sequel. The same observation applies to Assumption \ref{assu:cq} below.
The necessity of Assumption \ref{assu:h_a} is owed to the fact that the control  enters the state equation \eqref{eq} in a non-linear fashion via the mapping $H_\e$. This specific CQ  basically ensures the surjectivity of the operator ${\frac{1}{\eps}H_\eps'(\bar g)\bar y-\eps}:\ld \to L^\infty( D)$, see \eqref{h}, \eqref{h_m} and part (II) in the proof of Theorem \ref{thm} below.
\end{remark}

\begin{theorem}\label{thm}
Suppose that Assumptions \ref{assu:reg} and \ref{assu:h_a}  are true. 
Let $\bar g \in \FF$ be  a local optimum of \eqref{p10} in the sense of Definition \ref{def_e} and denote by $\bar y \in \hd \cap H^2(D)$ its associated state.  Then, there exists an 
 adjoint state $p \in \hd \cap H^2(D)$ and a multiplier $\zeta \in \li$ so that 
 the following optimality system is satisfied \ro{for each  representative associated to $\bar g$}:
 \begin{subequations} \label{eq:thm}  \begin{gather}
-\laplace p+\zeta p+\frac{1}{\eps}H_\eps(\bar g)p=2\chi_E(\bar y-y_d) \text{  in }D, \quad p=0 \text{ on }\partial D, \label{adj}
\\\zeta(x) \in [\min\{\beta_-'(\yy(x)),\beta_+'(\yy(x))\},\max\{\beta_-'(\yy(x)),\beta_+'(\yy(x))\}] \text{ a.e.\,in } D,\label{clarke0}
\\p \leq 0 \ \text{ a.e.\,in } D_n^{convex}\ro{\setminus \clos \AA},\label{sign_p0}
\\p \geq 0 \ \text{ a.e.\,in } D_n^{concave}  \setminus \clos \AA,\label{sign_p1}
%\\\zeta=0 \quad \text{ a.e.\,in }\AA\cap \{p>0\} \cap \{\bar y=0\},\label{zeta}
\\( p[\eps-\frac{1}{\eps}H_\eps'(\bar g)\bar y]-\alpha H_\eps'( \bar g),h-\bar g)_{\ld}+(\bar g-\bar g_{\mathfrak{sh}},h-\bar g)_{\WW}\geq 0\quad \forall\,h\in \FF. \label{grad_f0}
   \end{gather}\end{subequations}Moreover, 
   \begin{equation}\label{sign_p}
   p \leq 0\quad  \ro{\ae } \AA.\end{equation}
    If the set $\AA$ has measure zero, then \eqref{sign_p0}--\eqref{sign_p1} is replaced by   \ro{ \begin{equation}\label{sign_p'}
  p \leq 0 \ \text{ a.e.\,in } D_n^{convex},\quad    p \geq 0 \ \text{ a.e.\,in } D_n^{concave}.\end{equation}}
\end{theorem}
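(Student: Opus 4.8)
The plan is to retain the very adjoint state $p$ and multiplier $\zeta$ furnished by Lemma \ref{lem:tool}, since these already deliver \eqref{adj}, \eqref{clarke0} and \eqref{grad_f0} (which coincide verbatim with \eqref{lem:adj}--\eqref{lem:grad_f0}). Thus the only genuinely new content is the sign information \eqref{sign_p0}--\eqref{sign_p1}, \eqref{sign_p}, and their strengthening \eqref{sign_p'} when $\mu(\AA)=0$. The engine is a duality identity: testing the adjoint equation \eqref{adj} with $u:=S'(\bar g;v)$ and the linearized equation \eqref{lin1} with $p$, then subtracting, the principal parts and the $\tfrac1\eps H_\eps(\bar g)$-terms cancel, leaving
\[
(2\chi_E(\yy-y_d),u)_{\ld}=(\zeta p,u)_{\ld}-(\beta'(\yy;u),p)_{\ld}+\big(p[\eps-\tfrac1\eps H_\eps'(\bar g)\yy],v\big)_{\ld}.
\]
Substituting this into the B-stationarity inequality \eqref{eq:vi} and recognising the last three resulting terms as the left-hand side $G(v)$ of \eqref{grad_f0}, I arrive at the master inequality $T(u)+G(v)\ge 0$ for every feasible direction $v$, where $T(u):=(\zeta p,u)_{\ld}-(\beta'(\yy;u),p)_{\ld}$ and $u=S'(\bar g;v)$.

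Next I would use Assumption \ref{assu:h_a} to realize sharply localized directional derivatives. Writing $m:=\eps-\tfrac1\eps H_\eps'(\bar g)\yy$, observe that $m=\eps$ on $E$ (since $\bar g\le 0$ there forces $H_\eps'(\bar g)=0$), while on $D\setminus\bar E$ the integrability $\int_{D\setminus\bar E}|m|^{-2}\,dx<\infty$ renders the multiplication operator $v\mapsto mv$ from $\ld$ onto $L^\infty(D\setminus\bar E)$ surjective. Consequently, for any smooth $\phi$ with compact support in $\dn\setminus\clos\AA$, the functions $u=\pm\phi$ are attained as $u=S'(\bar g;v_\pm)$ by solving \eqref{lin1} backwards, i.e.\ by setting $v_\pm:=m^{-1}\big[-\laplace(\pm\phi)+\beta'(\yy;\pm\phi)+\tfrac1\eps H_\eps(\bar g)(\pm\phi)\big]$; the bracket lies in $\li$, so $v_\pm\in\ld$. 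Because $\phi$ vanishes in a neighbourhood of $\clos\AA$, so does $v_\pm$, whence both $v_\pm$ and $-v_\pm$ are feasible directions, and the linearity of $G$ gives $G(v_\pm)=0$. The master inequality then collapses to $T(\pm\phi)\ge 0$.

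I would then extract the signs pointwise. Using $\beta'(\yy;\pm\phi)=\pm\beta_\pm'(\yy)\phi$ and the support of $\phi$, the two inequalities read $\int(\zeta-\beta_+'(\yy))\phi p\ge 0$ and $\int(\beta_-'(\yy)-\zeta)\phi p\ge 0$; adding them annihilates the unknown $\zeta$ and yields $\int(\beta_-'(\yy)-\beta_+'(\yy))\phi p\ge 0$. By Lemma \ref{sign_b} one has $\beta_+'(\yy)>\beta_-'(\yy)$ on $\dn$, so letting $\phi\ge 0$ range over a dense family forces $p\le 0$ a.e.\ on $\dn\setminus\clos\AA$, which is \eqref{sign_p0}; the concave estimate \eqref{sign_p1} follows identically, the reversed strict inequality of Lemma \ref{sign_b} flipping the conclusion to $p\ge 0$. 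For the constraint sign \eqref{sign_p}, I would instead test \eqref{grad_f0} with directions $v\le 0$ supported in $\AA$: there $m=\eps$, $H_\eps'(\bar g)=0$ and $\bar g=0$, and—precisely where the choice of the $\hde$ norm rather than $H^s(D)$ pays off—the $H^s(D\setminus\bar E)$ part of the $\WW$ inner product drops out since $v$ is supported in $E$, leaving $\int_{\AA}(\eps p-\bar g_{\mathfrak{sh}})v\,dx\ge 0$; as $v\le 0$ is arbitrary this gives $\eps p\le \bar g_{\mathfrak{sh}}\le 0$, i.e.\ $p\le 0$ a.e.\ on $\AA$. Finally, if $\mu(\AA)=0$ the requirement that $v_\pm$ vanish on $\AA$ becomes vacuous, so $\phi$ need not avoid $\clos\AA$ and the previous argument delivers the signs on all of $\dn$ and $\dnn$, which is \eqref{sign_p'}.

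The main obstacle is the realization step: manufacturing feasible directions whose directional derivative $S'(\bar g;\cdot)$ equals a prescribed, sharply localized test function while simultaneously enforcing $G(v)=0$. This is exactly where Assumption \ref{assu:h_a} is indispensable—it lets me invert the coefficient $m$ on $D\setminus\bar E$ despite the control being coupled to the state through $H_\eps$—and where the locality of the $\hde$ norm on directions supported in $E$ is essential. The non-linearity (mere positive homogeneity, not oddness) of $S'(\bar g;\cdot)$ is circumvented by realizing $u=+\phi$ and $u=-\phi$ through two \emph{separate} directions and only then summing.
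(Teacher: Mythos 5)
Your overall strategy is the paper's own: keep the pair $(p,\zeta)$ from Lemma \ref{lem:tool}; pair the adjoint equation with the linearized equation \eqref{lin1} and insert B-stationarity to obtain the master inequality $T(S'(\bar g;v))+G(v)\ge 0$ (this is precisely \eqref{necc}); use Assumption \ref{assu:h_a} (plus the correct observation that $\eps-\frac{1}{\eps}H_\eps'(\bar g)\bar y=\eps$ a.e.\ on $E$, since $\bar g\le 0$ there) to solve \eqref{lin1} backwards and prescribe $S'(\bar g;v_\pm)=\pm\phi$ by two \emph{separate} directions; kill $G$ on directions vanishing near $\clos\AA$; conclude via the fundamental lemma and Lemma \ref{sign_b}; and read \eqref{sign_p} off \eqref{grad_f0} on $\AA$, with the $\mu(\AA)=0$ simplification as in the paper's step (II). However, two gaps remain.

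First, you take $\phi$ smooth "with compact support in $\dn\setminus\clos\AA$". The set $\dn$ is defined through the measurable condition $\bar y(x)\in\NN$ and is in general only a measurable set with no interior, so the only such $\phi$ may be $\phi\equiv 0$, and "letting $\phi\ge 0$ range over a dense family" is then vacuous. The localization to $\dn$ must happen \emph{after} the fundamental lemma, not at the level of test functions: let $\phi\ge 0$ range over all of $C_c^\infty(D\setminus\clos\AA)$, deduce $(\beta'_-(\yy)-\beta'_+(\yy))\,p\ge 0$ a.e.\ on $D\setminus\clos\AA$ from your two added inequalities, and only then restrict to $\dn$ and $\dnn$, where Lemma \ref{sign_b} fixes the sign. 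With this repair your route is fine and in fact slightly streamlines the paper's construction of the weights $\psi_m$ on $\AA_m$ and the limit $\chi_{\clos\AA_m}\to\chi_{\clos\AA}$.

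Second, and more seriously: your master inequality is established only for directions $v=h-\bar g$ with $h\in\FF\subset\WW$ (extended by positive homogeneity to the cone these generate in $\WW$), whereas $v_\pm$ is obtained by dividing an $\li$ function by $\eps-\frac{1}{\eps}H_\eps'(\bar g)\bar y$ and therefore lies only in $\ld$, with no $H^s$ regularity off $E$. Whenever $\operatorname{supp}\phi$ meets $D\setminus \bar E$, the term $(\bar g-\bar g_{\mathfrak{sh}},v_\pm)_{\WW}$ — hence $G(v_\pm)$ — is not even defined, so the step "linearity of $G$ gives $G(v_\pm)=0$" cannot be executed as written. The paper closes exactly this hole \emph{before} inserting the constructed directions: testing \eqref{grad_f0} with $\WW$-admissible variations yields the identities \eqref{eq_d-e} and \eqref{equality}, i.e.\ the density of $G$ vanishes a.e.\ off $\AA$ and the $\hde$-pairing drops out, reducing the master inequality to \eqref{necc0}; the reduced functional is then $L^2$-continuous by the Lipschitz estimate \eqref{s'lip} for $S'(\bar g;\cdot)$, and the dense embedding $\hde\dense L^2(D\setminus E)$ extends it (see \eqref{necc001} and \eqref{ne00}) to the $L^2$-closure of the cone, which does contain $v_\pm$. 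Without this elimination-plus-density step your argument fails on the portion of $\dn\cup\dnn$ contained in $D\setminus\bar E$; with it, your proof becomes a correct, mildly compressed version of the paper's.
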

\begin{remark}\label{rem:s}
Compared to classical optimality systems obtained via smoothening procedures \cite{tiba}, the  conditions in \eqref{eq:thm} are stronger, as they contain information about the sign of the adjoint state in points $x \in D$ where the non-smoothness is active (cf.\,Remark \ref{rem:sign}). 
%This sign condition cannot be obtained by smoothening techniques, but by exploiting the directional differentiability of the control to state map and the B-stationarity  from \eqref{eq:vi}, as the proof of Theorem \ref{thm} will show.
%We underline that this observation is consistent with various remarks from comparable contributions  \cite[Rem.\ 6.9]{paper}, \cite[Rem.\ 3.9]{mcrf}, \cite[Rem.\ 4.8]{st_coup}, \cite[Rem.\ 4.15]{cr_meyer}, \cite[Rem.\,3.20]{jnsao}, \cite[Rem.\,4.14]{amo}.

If $\beta$ is locally convex at each $z \in \NN$ for which $\{\bar y=z\}$ has positive measure, then \eqref{eq:thm} is of strong stationary type \ro{for a representative of $\bar g \in \FF$}, \ro{provided that $\mu(\dn \cap (\clos \AA \setminus \AA))=0$ where $\AA$ is the set associated to the respective representative} (see Theorem \ref{thm:equiv_B_strong} below). If $\beta$ happens to be concave at some non-smooth points that are attained by $\bar y$, then strong stationarity is guaranteed under the additional assumption that the set $D_n^{concave} \cap  \clos \AA$ has measure zero (or if $\AA$ has measure zero), cf.\,Assumption \ref{assu:cq} and Remark \ref{rem:needed} below.\end{remark} 
%\begin{remark}\label{rem:conc}
%If it is known from the beginning that $\beta$ is concave at all non-smooth points, then strong stationarity can be proven, by working with $-\eps g$ instead of $+\eps g$ on the right hand side in the approximating state equation \eqref{eq}. \ko{in this situation, the str stat opt syst looks...}
%\end{remark}
   \begin{proof}
   In view of Lemma \ref{lem:tool} we only need to
 show  the sign conditions \eqref{sign_p0}-\eqref{sign_p1}. We begin by taking a closer look at the information gathered from \eqref{grad_f0}. 
 Testing with $h:=\bar g+\chi_{D\setminus E} \phi,$ where $\phi \in H^s(D\setminus \bar E ),$ yields
  \begin{equation}\label{eq_d-e}
  ( p[\eps-\frac{1}{\eps}H_\eps'(\bar g)\bar y]-\alpha H_\eps'( \bar g),\phi)_{L^2(D\setminus E)}+(\bar g-\bar g_{\mathfrak{sh}},\phi)_{L^2(D\setminus E)}+(\bar g-\bar g_{\mathfrak{sh}},\phi)_{\hde}= 0.  \end{equation}
Now let $v \in L^2(E),\ v \leq 0 \text{ a.e.\ in }E$, be arbitrary but fixed. By testing  \eqref{grad_f0} with $h:=\chi_{E}v+\chi_{D\setminus E} \bar g$, one obtains 
 \begin{equation}
 \int_E (p[\eps-\frac{1}{\eps}H_\eps'(\bar g)\bar y]-\alpha H_\eps'( \bar g)+\bar g-\bar g_{\mathfrak{sh}})(v-\bar g) \,dx \geq 0.
\end{equation}This results in
\begin{equation}\label{cone}
 \int_E (p[\eps-\frac{1}{\eps}H_\eps'(\bar g)\bar y]-\alpha H_\eps'( \bar g)+\bar g-\bar g_{\mathfrak{sh}})\o \,dx \geq 0 \quad \forall\,\o\in \overline{\cone(\widetilde \FF  -\bar g)}^{L^2(E)},
\end{equation}where \[\widetilde \FF:=\{v \in L^2(E):v \leq 0 \text{ a.e.\,in }E\}\]
and 
$\cone (\widetilde \FF  -\bar g)$ is the conical hull of the set $\widetilde \FF  -\bar g$. By  \cite[Lem.\,6.34]{bs}, it holds
\begin{equation}\label{def_c}\begin{aligned}
\overline{\cone(\widetilde \FF  -\bar g)}^{L^2(E)}=\{\o \in L^2(E):  \o \leq 0 \text{ a.e.\ in }\AA\}.
\end{aligned}
\end{equation}
Next,  let $ \phi \in C_c^\infty(E)$ be arbitrary, but fixed. We test 
 \eqref{cone}  with $\chi_{E \setminus \AA}\phi $. Then, the fundamental lemma of calculus of variations gives in turn  
 \begin{equation}\label{equality}
 p[\eps-\frac{1}{\eps}H_\eps'(\bar g)\bar y]-\alpha H_\eps'( \bar g)+\bar g-\bar g_{\mathfrak{sh}} = 0\quad  \text{ a.e.\,in } E \setminus \AA.
 \end{equation}
Similarly, let $ v \in C_c^\infty(E), \ v \leq 0$ be arbitrary, and we test 
 \eqref{cone}  with $\chi_{\AA}v,$ whence $$p[\eps-\frac{1}{\eps}H_\eps'(\bar g)\bar y]-\alpha H_\eps'( \bar g)+\bar g-\bar g_{\mathfrak{sh}} \leq 0\quad  \text{ a.e.\,in } \AA$$ follows.
By \eqref{reg_h'}, the definition of $\AA$, and since $\bar g_{\mathfrak{sh}} \in \FF$,  this implies
 \begin{equation}\label{s_p}
  \eps p\leq \bar g_{\mathfrak{sh}}\leq 0 \quad \text{a.e.\,in  }\AA. \end{equation}
Next we distinguish between the cases when $\AA$ has positive measure (I) and when $\AA$ has measure zero (II).
  \\(I) 
Now let $ h \in \FF$ be arbitrary but fixed and  test \eqref{adj} with $u:=S'(\bar g;h-\bar g)$ and \eqref{lin1} with right hand side $h-\bar g$ with $p$. Then, in view of \eqref{eq:vi}, we have   \begin{equation}\begin{aligned}\label{necc}
0&\leq (2\chi_E(\bar y-y_d),u)_{\ld}-(\alpha H_\eps'(\bar g),h-\bar g)_{\ld} +(\bar g -\bar g_{\mathfrak{sh}},h-\bar g)_{\WW} 
\\&\overset{\eqref{adj}}{=}(-\laplace p+\zeta p +\frac{1}{\eps}H_\eps(\bar g)p,u)_{\ld}-(\alpha H_\eps'(\bar g),h-\bar g)_{\ld} +(\bar g -\bar g_{\mathfrak{sh}},h-\bar g)_{\WW} 
\\&=-(\laplace u,p)_{L^2(D)}+(\zeta p ,u)_{L^2(D)}+\frac{1}{\eps}(H_\eps(\bar g)p,u)_{L^2(D)}
\\&\quad -(\alpha H_\eps'(\bar g),h-\bar g)_{\ld} +(\bar g -\bar g_{\mathfrak{sh}},h-\bar g)_{\WW} 
\\&\overset{\eqref{lin1}}{=}(\zeta p ,u)_{L^2(D)}-(\beta'(\bar y; u),p)_{L^2(D)}
-\frac{1}{\eps}(H_\eps'(\bar g)\bar yp,h-\bar g)_{L^2(D)}+\eps(h-\bar g,p)_{L^2(D)}
\\&\quad -(\alpha H_\eps'(\bar g),h-\bar g)_{\ld} +(\bar g -\bar g_{\mathfrak{sh}},h-\bar g)_{\WW}
\\&=(\zeta p ,u)_{L^2(D)}-(\beta'(\bar y; u),p)_{L^2(D)}
\\&\quad+( p[\eps-\frac{1}{\eps}H_\eps'(\bar g)\bar y]-\alpha H_\eps'( \bar g)+\bar g-\bar g_{\mathfrak{sh}},h-\bar g)_{L^2(D\setminus E)}+(\bar g-\bar g_{\mathfrak{sh}},h-\bar g)_{\hde}
\\&\quad+( p[\eps-\frac{1}{\eps}H_\eps'(\bar g)\bar y]-\alpha H_\eps'( \bar g)+\bar g-\bar g_{\mathfrak{sh}},h-\bar g)_{L^2(E\setminus \AA)}
\\&\quad+( p[\eps-\frac{1}{\eps}H_\eps'(\bar g)\bar y]-\alpha H_\eps'( \bar g)+\bar g-\bar g_{\mathfrak{sh}},h-\bar g)_{L^2( \AA)}.
\end{aligned} \end{equation}
In light of \eqref{eq_d-e} and \eqref{equality}, this leads to 
\begin{equation}\begin{aligned}\label{necc0}
0\leq
&(\zeta p ,S'(\bar g;h-\bar g))_{L^2(D)}-(\beta'(\bar y; S'(\bar g;h-\bar g)),p)_{L^2(D)}
\\&\quad+( p[\eps-\frac{1}{\eps}H_\eps'(\bar g)\bar y]-\alpha H_\eps'( \bar g)+\bar g-\bar g_{\mathfrak{sh}},h-\bar g)_{L^2( \AA)} \quad \forall\,h \in \FF.
\end{aligned} \end{equation}
Testing again with $h:=\bar g+\chi_{D\setminus E} \phi,$ where $\phi \in H^s(D\setminus \bar E ),$ and taking the dense embedding  $\hde \dense L^2(D\setminus E)$ together with the continuity of $S'(\bar g;\cdot):\ld\to \ld$ into account (\bl{cf.\,\eqref{s'lip} in Lemma \ref{lem:dirderiv}}),  further implies
\begin{equation}\begin{aligned}\label{necc001}
0\leq
&(\zeta p ,S'(\bar g; \phi))_{L^2(D)}-(\beta'(\bar y; S'(\bar g;\phi)),p)_{L^2(D)} \quad \forall\,\phi \in L^2(D), \phi=0 \text{ a.e.\,in }E.
\end{aligned} \end{equation}
Now, let $\psi \in C^\infty_c(D), \psi=0$ in $\bar E,$ be arbitrary, but fixed, and define 
\begin{equation}\label{h}
\widehat h:=\frac{-\laplace \psi + \beta'(\bar y;\psi)+\frac{1}{\eps}H_\eps(\bar g)\psi}{\eps-\frac{1}{\eps}H_\eps'(\bar g)\bar y}. \end{equation}
Thanks to Assumption \ref{assu:h_a}, the above definition makes sense and $\widehat h \in \ld,$ as the function in the numerator belongs to $\li.$
 Since \eqref{lin1} is uniquely solvable, this means that
$$S'(\bar g;\widehat h)=\psi.$$ Note that $\psi=0$ on $\partial D$. Since $\widehat h=0$ in $E$, we can test with $\widehat h$ in \eqref{necc001}. This implies 
 \begin{equation}\begin{aligned}
 0\leq (\zeta p ,\psi)_{L^2(D\setminus E)}-(\beta'(\bar y; \psi),p)_{L^2(D\setminus E)} \quad \forall \psi \in C^\infty_c(D \setminus \bar E).
\end{aligned} \end{equation}By applying  the fundamental lemma of calculus of variations and by making use of the positive homogeneity of the directional derivative w.r.t.\,direction, we obtain
\begin{equation}\begin{aligned}\label{f_flcv}
\zeta p &\geq \beta'_+(\bar y)p \quad \text{a.e.\,in }D \setminus \bar E,
\\
\zeta p &\leq \beta'_-(\bar y)p \quad \text{a.e.\,in }D \setminus \bar E.
\end{aligned}
\end{equation}
Now we go back to \eqref{necc0} and let $v \in L^2(\bl{D}),\ v \leq 0 \text{ a.e.\,in }E,$ be arbitrary but fixed. By testing  \eqref{necc0} with $h:=\chi_{E}v+\chi_{D\setminus E} \bar g$ one arrives at
 \begin{equation}\begin{aligned}
0\leq
&(\zeta p ,S'(\bar g;\chi_{E}(v-\bar g)))_{L^2(D)}-(\beta'(\bar y; S'(\bar g;\chi_{E}(v-\bar g))),p)_{L^2(D)}
\\&\quad+( p[\eps-\frac{1}{\eps}H_\eps'(\bar g)\bar y]-\alpha H_\eps'( \bar g)+\bar g-\bar g_{\mathfrak{sh}},v-\bar g)_{L^2( \AA)}.
\end{aligned} \end{equation}
Using again density arguments, the positive homogeneity and the continuity of the directional derivative w.r.t.\,direction, then  yields
  \begin{equation}\begin{aligned}\label{ne00}
0\leq
&(\zeta p ,S'(\bar g;\bl{\mathcal{E} (h)} ))_{L^2(D)}-(\beta'(\bar y; S'(\bar g;\bl{\mathcal{E} (h)} )),p)_{L^2(D)}
\\&\quad+( p[\eps-\frac{1}{\eps}H_\eps'(\bar g)\bar y]-\alpha H_\eps'( \bar g)+\bar g-\bar g_{\mathfrak{sh}},h)_{L^2( \AA)} \quad \forall\,h\in \overline{\cone(\widetilde \FF  -\bar g)}^{L^2(E)},
\end{aligned} \end{equation}
 where we recall \begin{equation*}
\overline{\cone(\widetilde \FF  -\bar g)}^{L^2(E)}=\{\o \in L^2(E):  \o \leq 0 \text{ a.e.\ in }\AA\},
\end{equation*}see \eqref{def_c}. 
\bl{Here, $\bl{\mathcal{E} (h)} \in \ld$ denotes the extension by zero on $D$ of $h\in L^2(E)$.}
We proceed with similar arguments to those used in  the proof of \eqref{f_flcv}. Let $m\in \N_+$. Let $\AA$ be the (uniquely specified) set associated to an arbitrary, but fixed, representative of $\bar g.$ 
\bl{By Assumption \ref{assu:E}, the set $\clos D\setminus E$ is closed. From \cite[Lem.\,A.1]{cr_meyer} we then deduce that there exists }
$\psi_m \in C^\infty(D), \psi_m=0$ in $(\clos D\setminus E) \cup \clos \AA_m,$  $\psi_m>0$ in $E \setminus \clos \AA_m,$
where 
\begin{equation}\label{am}
\AA_m:=\{x \in D:d(\AA,x)<1/m\}.\end{equation}
We notice that $\AA_m$ is an open set. 
Define 
\begin{equation}\label{h_m}
\widehat h_m:=\frac{-\laplace( \phi \psi_m )+ \beta'(\bar y;\phi \psi_m)+\frac{1}{\eps}H_\eps(\bar g)\phi \psi_m}{\eps-\frac{1}{\eps}H_\eps'(\bar g)\bar y}, \end{equation} where $\phi \in C_c^\infty(D)$ is arbitrary, but fixed.
Thanks to Assumption \ref{assu:h_a}, the above definition makes sense and $\widehat h_m \in \ld,$ as the function in the numerator belongs to $\li.$
 Since \eqref{lin1} is uniquely solvable, this means that
\begin{equation}\label{idd}
S'(\bar g;\widehat h_m)=\phi \psi_m=S'(\bar g;\chi_E \widehat h_m).\end{equation}
Note that $\psi_m=0$ on $\partial D$ and $\widehat h_m=0$ \bl{a.e.\,in $D \setminus  E$} (recall that $d(\bar E,\partial D)>0$ and $\mu(\partial E)=0$, by Assumption \ref{assu:E}). Since $\widehat h_m=0$ in $\AA_m \supset \clos \AA,$ cf.\,\eqref{am}, we can test with $ \chi_E \widehat h_m$ in \eqref{ne00}. This gives in turn  
 \begin{equation}\begin{aligned}
0\leq
(\zeta p ,\phi  \psi_m)_{L^2(D)}-(\beta'(\bar y; \phi  \psi_m),p)_{L^2(D)}
 \quad \forall\,m \in \N_+.
\end{aligned} \end{equation}
As $ \psi_m=0$ in $(\clos D\setminus E) \cup \clos \AA_m,$  by definition, the above inequality is equivalent to 
 \begin{equation}\begin{aligned}
0\leq
(\zeta p ,\phi \psi_m)_{L^2(E \setminus \clos \AA_m)}-(\beta'(\bar y; \phi \psi_m),p)_{L^2(E \setminus \clos \AA_m)}
 \quad \forall\,m \in \N_+.
\end{aligned} \end{equation}
By applying again the fundamental lemma of calculus of variations and by making use of the positive homogeneity of the directional derivative w.r.t.\,direction, we obtain
$$\zeta p \geq \beta'_+(\bar y)p \quad \text{a.e.\,in }E \setminus \clos \AA_m,$$
$$\zeta p \leq \beta'_-(\bar y)p \quad \text{a.e.\,in }E \setminus \clos \AA_m,$$ where we also employed the fact that $\psi_m>0$ in $E \setminus \clos \AA_m$, by definition.
Passing to the limit $m \to \infty$ then yields
\begin{equation}\begin{aligned}\label{f_flcv0}
\zeta p \geq \beta'_+(\bar y)p \quad \text{a.e.\,in }E \setminus \clos \AA,
\\
\zeta p \leq \beta'_-(\bar y)p \quad \text{a.e.\,in }E \setminus \clos \AA.
\end{aligned}
\end{equation}
Here we relied on the convergence \begin{equation}\label{chii}
\chi_{\clos \AA_m}(x) \to \chi_{\clos \AA}(x) \quad \forall \,x\in  D, \  \text{ as }m \to \infty, \end{equation}
which can be proven by arguing as in the proof of \cite[Lemma 3.6]{mcrf}. For convenience of the reader, we briefly recall the  arguments.
 From \eqref{am} one  infers that 
$$\raisebox{3pt}{$\chi$}_{\overline {\AA_{m}}}\,(x) = \raisebox{3pt}{$\chi$}_{\bar{\AA}}\,(x)=1 \quad  \ff  m \in \N_+, \quad  \text{if } x \in   \overline \AA.$$
If $x \not \in \overline \AA$, then $ d(\AA,x)>0$. Hence, there exists $m=m(x) \in \N_+$ with  $1/m(x) < d(\AA,x)$, i.e., 
$x \not \in  \overline {\AA_{m(x)}}.$ 
By  \eqref{am}, we see that $x \not \in  \overline {\AA_{m(x)+k}},$ for each $k\in \N,$ whence 
$$\raisebox{3pt}{$\chi$}_{\overline {\AA_{m(x)+k}}}\,(x) = 0 \quad  \ff  k \in \N \quad \text{and}\quad \raisebox{3pt}{$\chi$}_{\bar{\AA}}\,(x) = 0,  \quad   \text{if } x \not  \in   \overline \AA$$follows. Altogether, this implies \eqref{chii}. Now we go back to the proof, and we see that from \eqref{f_flcv} and \eqref{f_flcv0} in combination with Lemma \ref{sign_b} we can finally deduce 
\begin{subequations} \label{eq:1}  \begin{gather}
p \leq 0 \text{ a.e.\,in } D_n^{convex} \setminus \clos \AA,
\\p \geq 0 \text{ a.e.\,in } D_n^{concave} \setminus \clos \AA.
   \end{gather}\end{subequations}
 \ro{Together with \eqref{s_p},} this proves the sign conditions \eqref{sign_p0}-\eqref{sign_p1} and \eqref{sign_p}.

(II) If $\AA$ has measure zero, the proof simplifies considerably. One argues as in step (I) to obtain \eqref{f_flcv}. Since $\mu(\AA)=0,$ \eqref{ne00} now reads
\begin{equation}\begin{aligned}
0\leq
(\zeta p ,S'(\bar g;\chi_{E}h))_{L^2(D)}-(\beta'(\bar y; S'(\bar g;\chi_{E}h)),p)_{L^2(D)}
 \quad \forall\,h\in {L^2(E)}.
\end{aligned} \end{equation}Taking $\psi \in C_c^\infty(D),\ \psi=0$ in $\bar D \setminus E$, instead of $\psi_m$, and arguing in the exact same manner ultimately leads to 
$$\zeta p \geq \beta'_+(\bar y)p \quad \text{a.e.\,in }E,$$
$$\zeta p \leq \beta'_-(\bar y)p \quad \text{a.e.\,in }E.$$
In view of  Lemma \ref{sign_b}, this proves
\begin{subequations}   \begin{gather}
p \leq 0 \text{ a.e.\,in } D_n^{convex},
\\p \geq 0 \text{ a.e.\,in } D_n^{concave}.
   \end{gather}\end{subequations}
\end{proof}

\begin{remark}[Relevance of the $+\e g$ term in the state equation]\label{rem:eps}
A close inspection of the proof of Theorem \ref{thm} shows that the presence of the term $+\e g$ on the right hand side of  the state equation is fundamental for showing the sign conditions \eqref{sign_p0}-\eqref{sign_p1}. To see this, we take a look at the definitions of $\widehat h$ and $\widehat h_m,$ cf.\,\eqref{h} and \eqref{h_m}, respectively. If $\e g$ would not appear in \eqref{eq}, then the denominators of $\widehat h$ and $\widehat h_m$ would vanish whenever $\bar g \leq 0$, see  \eqref{reg_h'}. Besides, it is essential that we take $+\e g$, not $-\e g$ on the right hand side of  \eqref{eq}, otherwise the adjoint state $p$ has the "wrong" sign in $\dn,$ see also Remark \ref{rem:needed}. All the other results associated to this paper  including the approximation scheme in \cite{p1} stay true if the term $\e g$  is not considered at all  in \eqref{eq}. 
\end{remark}

The optimality system in Theorem \ref{thm} is of strong stationary type \ro{(for a representative, cf.\,Theorem \ref{thm:equiv_B_strong} below)}, provided that the following requirement is true.
\begin{assumption}["Constraint Qualification" because of control constraints]\label{assu:cq}\ro{Each local optimum  $\bar g \in \FF$ of \eqref{p10} has a representative so that }
\begin{equation}\label{cqm}
\ro{\mu[ (\dn \cap ( \clos \AA \setminus \AA))\cup(\dnn \cap \clos \AA)]=0},\end{equation}
where $\AA$ is  the set associated to this representative. \end{assumption}

% \ko{
%\begin{remark}[A setting where Assumption \ref{assu:cq} is verified]\label{cq}
%Let $\NN^{concave}$ denote the (countable) set of non-smooth points of $\beta$ at which $\beta$ is concave.
%If for each $n \in \NN^{concave}$, it holds $f(x) \neq \beta(n)$ a.e.\,in $\AA$, and  $\AA \cap \partial {\interior \{\bar y=n\}}$ has measure zero, then the condition in Assumption \ref{assu:cq} is satisfied. To see this, we observe that
%\begin{equation}\label{state_def} 
%  \begin{aligned}[t]
%   -\laplace \bar y + \beta(\bar y)=f \quad \text{a.e.\,in }\AA,\end{aligned} 
%\end{equation} in view of \eqref{eq}, \eqref{reg_h} and the definition of $\AA$. 
%Therefore,
%\[f=\beta(n)\quad \text{a.e.\,in }\AA \cap \interior\{\bar y=n\}.\]Since we suppose that $f \neq \beta(n)$ a.e.\,in $\AA$ and $\mu(\AA \cap \partial {\interior \{\bar y=n\}})=0$, we have $$\mu(\AA \cap { \{\bar y=n\}})=0,$$ whence the requirement in Assumption \ref{assu:cq} follows.
%\end{remark}}
%\ko{\begin{remark}
%If $\beta$ is fully concave, 
%one could take $-\eps$ in the case $$\mu(\dn \cap \AA)=0.$$
%\\Alternatively, Assmption \ref{assu:alt}, but no strong stat in this case
%\end{remark}}

\begin{theorem}[Strong stationarity]\label{thm:equiv_B_strong} 
Let $\bar g \in \FF$ together with its state $\bar y \in \hd \cap H^2(D) $, some 
 adjoint state $p \in \hd \cap H^2(D) $ and a multiplier $\zeta \in \li$ 
 satisfy the optimality system \eqref{eq:thm}-\eqref{sign_p} \ro{for a representative of $\bar g$. If this representative satisfies \eqref{cqm}, then it fulfills the variational inequality \eqref{eq:vi}}.
  If Assumptions \ref{assu:reg} and \ref{assu:h_a} are also true, then \eqref{eq:thm}-\eqref{sign_p} is equivalent to \eqref{eq:vi} \ro{for this particular representative}. 
 \end{theorem}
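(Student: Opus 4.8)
The plan is to establish the nontrivial implication, namely that the optimality system \eqref{eq:thm}--\eqref{sign_p} together with the constraint qualification \eqref{cqm} forces the variational inequality \eqref{eq:vi}; the reverse implication, which upgrades this to the claimed equivalence once Assumptions \ref{assu:reg} and \ref{assu:h_a} are in force, will be read off from Theorem \ref{thm}. Throughout, $\bar g$ is the fixed local optimum of this subsection with state $\yy=S(\bar g)$, and $p,\zeta$ are as in the hypothesis.

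First I would fix $h\in\FF$ and set $u:=S'(\bar g;h-\bar g)\in\hd\cap H^2(D)$, which by Lemma \ref{lem:dirderiv} solves the linearized equation \eqref{lin1} with direction $h-\bar g$. The core of the argument is to run the computation \eqref{necc} in reverse: testing the adjoint equation \eqref{adj} against $u$, integrating by parts (licit as $p,u\in\hd\cap H^2(D)$), and then inserting \eqref{lin1} to replace $-\laplace u$, the two occurrences of the $\tfrac{1}{\eps}H_\eps(\bar g)$-term cancel and one is left with the identity
\[(2\chi_E(\yy-y_d),u)_{\ld}=(\zeta p,u)_{\ld}-(\beta'(\yy;u),p)_{\ld}+\big(p[\eps-\tfrac{1}{\eps}H_\eps'(\bar g)\yy],\,h-\bar g\big)_{\ld}.\]
Substituting this into the left-hand side of \eqref{eq:vi}, the last term above combines with $-(\alpha H_\eps'(\bar g),h-\bar g)_{\ld}+(\bar g-\bar g_{\mathfrak{sh}},h-\bar g)_{\WW}$ into precisely the nonnegative expression of \eqref{grad_f0}. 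Hence it remains to prove that the residual $(\zeta p,u)_{\ld}-(\beta'(\yy;u),p)_{\ld}=\int_D p\,[\zeta u-\beta'(\yy;u)]\,dx$ is nonnegative.

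The remaining step is a pointwise sign analysis of the integrand. By positive homogeneity, $\beta'(\yy;u)=\beta_+'(\yy)\,u$ where $u>0$ and $\beta'(\yy;u)=\beta_-'(\yy)\,u$ where $u<0$. On $D\setminus(\dn\cup\dnn)$ one has $\yy\notin\NN$, so $\beta$ is differentiable there and $\zeta=\beta'(\yy)$ by \eqref{clarke0}, whence the integrand vanishes. On $\dn$, Lemma \ref{sign_b} gives $\beta_+'(\yy)>\beta_-'(\yy)\ge0$ and \eqref{clarke0} gives $\zeta\in[\beta_-'(\yy),\beta_+'(\yy)]$; distinguishing the signs of $u$ one checks $\zeta u-\beta'(\yy;u)\le0$, so nonnegativity of the integrand requires $p\le0$, which is supplied by \eqref{sign_p0} on $\dn\setminus\clos\AA$ and by \eqref{sign_p} on $\dn\cap\AA$. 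Symmetrically, on $\dnn$ one obtains $\zeta u-\beta'(\yy;u)\ge0$ and needs $p\ge0$, supplied by \eqref{sign_p1} on $\dnn\setminus\clos\AA$.

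The main obstacle is exactly this bookkeeping, because the system controls the sign of $p$ only on $\dn\setminus\clos\AA$, on $\dnn\setminus\clos\AA$, and (via \eqref{sign_p}) on $\AA$, whereas the analysis demands control on all of $\dn$ and $\dnn$. The constraint qualification \eqref{cqm} is tailored precisely to kill the two uncontrolled overlaps $\dn\cap(\clos\AA\setminus\AA)$ and $\dnn\cap\clos\AA$ by rendering them null; note that on $\dnn\cap\AA$ the bound $p\le0$ from \eqref{sign_p} would carry the wrong sign, which is why the full set $\dnn\cap\clos\AA$ (and not merely $\dnn\cap(\clos\AA\setminus\AA)$) is excluded. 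With these null sets removed the integrand is nonnegative a.e., and \eqref{eq:vi} follows. Finally, for the converse under Assumptions \ref{assu:reg} and \ref{assu:h_a}, since $\bar g$ is the given local optimum the inequality \eqref{eq:vi} holds by B-stationarity and Theorem \ref{thm} furnishes an adjoint state and multiplier satisfying \eqref{eq:thm}--\eqref{sign_p}; together with the forward implication this yields the asserted equivalence for the chosen representative.
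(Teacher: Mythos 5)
Your proof is correct and follows essentially the same route as the paper: the identity obtained by testing \eqref{adj} with $u=S'(\bar g;h-\bar g)$ and \eqref{lin1} with $p$, followed by absorbing the remaining terms into \eqref{grad_f0}, is precisely the paper's computation, and your pointwise case analysis on $\dn$, $\dnn$ and $D\setminus(\dn\cup\dnn)$ --- using Lemma \ref{sign_b}, \eqref{sign_p0}--\eqref{sign_p1}, \eqref{sign_p}, and \eqref{cqm} to remove the uncontrolled sets $\dn\cap(\clos\AA\setminus\AA)$ and $\dnn\cap\clos\AA$ (including your correct observation that \eqref{sign_p} carries the wrong sign on $\dnn\cap\AA$) --- is exactly the paper's derivation of the a.e.\ inclusion $\zeta p\in[\beta_+'(\yy)p,\beta_-'(\yy)p]$, which it then integrates after splitting $\eta=\eta^++\eta^-$. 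One caveat on the converse: in the theorem $\bar g$ is \emph{not} assumed locally optimal, so you cannot invoke B-stationarity to produce \eqref{eq:vi}; the paper instead notes that the proof of Theorem \ref{thm} uses the local optimality of $\bar g$ only through \eqref{eq:vi}, so it applies verbatim to any $\bar g$ satisfying \eqref{eq:vi} --- with this one-line adjustment your argument is complete.
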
 

\begin{proof}
As a result of \eqref{clarke0}-\eqref{sign_p1}, \eqref{sign_p}, in combination with Lemma \ref{sign_b} and \eqref{cqm}, one has 
\begin{equation}\begin{aligned}\label{des}
 \zeta(x) p(x) \in [\beta_+'(\yy(x))p(x),\beta_-'(\yy(x))p(x)] \quad \ae  D.
\end{aligned}
\end{equation}
Note that $\zeta(x)=\beta_-'(\yy(x))=\beta_+'(\yy(x))$ a.e.\,in $D \setminus (\dn \cup \dnn)$, by \eqref{clarke0}. Now, let $h \in \FF$ be arbitrary but fixed and abbreviate
$\eta:=S'(\bar g;h-\bar g)$ as well as $\eta^+:=\max\{\eta,0\}$ and $\eta^-:=\min\{\eta,0\}$. 
From \eqref{des} we deduce
$$  \zeta p \eta^+ \geq  \beta_+'(\yy)p\eta^+, \quad  \zeta p \eta^- \geq  \beta_-'(\yy)p\eta^- \quad \ae {D},$$
  which results in 
$$\int_D  \zeta p (\eta^+ + \eta^-) \,dx   \geq 
\int_{\{x \in D: \eta(x) > 0\}}  \beta_+'(\yy)p\eta^+ \,dx + \int_{\{x \in D: \eta(x) <0\}}  \beta_-'(\yy)p \eta^- \,dx.
$$
  Thus,   since $\eta=\eta^+ + \eta^-$, we have 
\begin{equation}\label{in0}
(\zeta p, \eta)_{\ld}-(\beta'(\yy;\eta), p)_{\ld} \geq 0 \quad \forall\,h \in \FF.
\end{equation}
In light of   \eqref{adj} and \eqref{lin1} with right hand side $h-\bar g$ tested with $\eta$ and $p$, respectively,  \eqref{in0} implies 
\begin{equation}\begin{aligned}
&0 \leq (2\chi_E(\bar y-y_d),\eta)_{L^2(D)}-(\frac{1}{\eps}H_\eps(\bar g)p,\eta)_{L^2(D)}+(\laplace p,\eta)_{L^2(D)}
\\&\quad -(\laplace \eta,p)_{L^2(D)}+(\frac{1}{\eps}H_\eps(\bar g)\eta+\frac{1}{\eps}H_\eps'(\bar g)\bar y (h-\bar g)-\eps\, (h-\bar g),p)_{\ld}
\\&=(2\chi_E(\bar y-y_d),\eta)_{L^2(D)}+(\frac{1}{\eps}H_\eps'(\bar g)\bar y p-\eps p,h-\bar g)_{\ld}
\\&\overset{\eqref{grad_f0}}{\leq }(2\chi_E(\bar y-y_d),\eta)_{L^2(D)}-\alpha(H_\eps'(\bar g),h-\bar g)_{\ld}+(\bar g-\bar g_{\mathfrak{sh}},h-\bar g)_{\WW} \quad \forall\,h \in \FF.
%\\&=(\zeta p ,u)_{L^2(D)}-(\laplace u,p)_{L^2(D)}+\frac{1}{\eps}(H_\eps(\bar g)p,u)_{L^2(D)}-(H_\eps'(\bar g),h)_{\ld}
%\\&=(\zeta p ,u)_{L^2(D)}-(\beta'(\bar y; u),p)_{L^2(D)}
%-\frac{1}{\eps}(H_\eps'(\bar g)yh,p)_{L^2(D)}-\eps(h,p)_{L^2(D)}-(H_\eps'(\bar g),h)_{\ld}
%\\&\overset{\eqref{d-a-t}}{=}(\zeta p ,u)_{L^2((D \setminus \O_{\bar g}) \cup \AA)}-(\beta'(\bar y; u),p)_{L^2((D \setminus \O_{\bar g})\cup \AA)}
%\underbrace{-\frac{1}{\eps}(H_\eps'(\bar g)yh,p)_{L^2(\AA)}}_{=0, \text{since }\AA \subset \O_{\bar g}}-\eps(h,p)_{L^2(\AA)}-\underbrace{(H_\eps'(\bar g),h)_{\ld}}_{=0, \text{since }\AA \subset \O_{\bar g}}
%\geq 0
%\\&\qquad \qquad \qquad \qquad \qquad \qquad \qquad \quad \qquad \forall h \in \ld, h_{|\AA }\leq 0.
\end{aligned} \end{equation}
This completes the proof of the first statement. The converse assertion is due to Theorem \ref{thm}. Here we observe that the only information used about the local minimizer $\bar g \in \FF$ in the proof of Theorem \ref{thm} is contained in \eqref{eq:vi}.
\end{proof}

\begin{remark}\label{rem:needed}
The proof of Theorem \ref{thm:equiv_B_strong} shows that, in order to prove the implication \eqref{eq:thm}-\eqref{sign_p} $\Rightarrow$ \eqref{eq:vi}, it is indispensable that \eqref{des} is true. In view of Lemma \ref{sign_b}, this is equivalent to 
\begin{subequations}  \label{needed} \begin{gather}
p \leq 0 \text{ a.e.\,in } D_n^{convex},
\\p \geq 0 \text{ a.e.\,in } D_n^{concave}.
   \end{gather}\end{subequations}

 \ro{If $\mu(\AA)=0,$ the assertions in Theorem \ref{thm:equiv_B_strong} are 
valid  for all representatives and there is no need to impose \eqref{cqm}. Recall that, according to Theorem \ref{thm}, \eqref{sign_p0}--\eqref{sign_p1} is replaced by \eqref{sign_p'} if  $\mu(\AA)=0.$}
   
   When establishing strong stationary optimality conditions for an optimal control problem with non-smooth PDE - and control constraints,
it is common to require that the set where the non-smoothness and control constraints are active at the same time has measure zero \cite[Assump.\,3.3]{mcrf}. See also \cite{wachsm_2014}, for the  case when the control constrained  minimization problem is governed by a variational inequality.

\ro{We emphasize that  Assumption \ref{assu:cq} is  less restrictive than \cite[Assump.\,3.3]{mcrf}, since  the desired sign condition for the adjoint state, cf.\,\eqref{needed}, is missing only for a.a.\, $x  \in \clos \AA \setminus \AA$ for which $\beta$ is non-smooth and convex at $\yy(x)$, see \eqref{sign_p0} and \eqref{sign_p}; note that the latter is as a direct consequence of \eqref{grad_f0}. 
Cf.\,also Remark \ref{rem:eps}.}
%Unfortunately, this is not true in the non-smooth 'concave' points, unless $\mu(\AA)=0$. \ko{From \eqref{sign_p} and \eqref{needed}, cf.\,also \eqref{sign_p1}, we know that in the concave situation we only need to show $p=0$ a.e.\,in $\clos \AA$, (if we want str stat, in the case where $\AA$ has positive measure)which will be the \ko{goal} of the upcoming subsection.}
\end{remark}

\section{A condition on the given data for which Assumption \ref{assu:h_a} is satisfied}\label{sec:rem}

In this last section we provide a condition which guarantees the validity of the "constraint qualification" regarding the non-linear term acting on the control in the state equation, i.e.,
Assumption \ref{assu:h_a}.
Since in the recent contribution \cite{p3} we   work again  in the framework of  \cite{p1}, it makes sense to check if  the requirements from \cite{p1} (which were not needed in the present manuscript) help us prove that Assumption \ref{assu:h_a} is true for $\e$ small enough.
As the next result shows, this is possible, if we slightly modify \cite[Assump.\,4.11]{p1}. Cf.\,also Remark \ref{remm} below.

\begin{lemma}\label{lem}
\ro{Suppose that 
\begin{equation}\label{f0}
 \esssup_{\ro{x \in  E}} f(x)   \leq \beta(0), \quad \esssup_{\ro{x \in D\setminus \bar E}} f(x) <\beta(0),\end{equation}
Then, for each   sequence $\{ g_\e\}\subset \ld \cap \lde$ that is uniformly bounded in $\lde$ and satisfies $g_\e \leq 0 \ \ae E$, there exists $\e_0>0$, so that }
\[S_\e( g_\e) \leq 0 \quad \ae \ro{D},\quad \forall\,\e\in (0,\e_0].\]
Note that the control to state map from Lemma \ref{lem:S} is denoted here by $S_\e.$
\end{lemma}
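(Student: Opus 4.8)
The plan is to establish the pointwise sign condition by a comparison (weak maximum principle) argument: I test the weak form of the state equation satisfied by $y_\e := S_\e(g_\e)$ with its positive part $y_\e^+ := \max\{y_\e,0\}$, and show that for $\e$ small enough $\|\nabla y_\e^+\|_{\ld} = 0$; since $y_\e^+ \in \hd$ (as $y_\e \in \hd$), Poincar\'e's inequality then forces $y_\e^+ = 0$, i.e.\ $y_\e \leq 0$ \ae D. The birth of the threshold $\e_0$ will come entirely from estimating the right-hand side.

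First I would record the weak formulation: for all $v \in \hd$,
\begin{equation*}
\int_D \nabla y_\e \cdot \nabla v \, dx + \int_D \beta(y_\e) v \, dx + \frac{1}{\e}\int_D H_\e(g_\e) y_\e v \, dx = \int_D (f + \e g_\e) v \, dx,
\end{equation*}
which is legitimate since $y_\e \in \hd \cap H^2(D)$ by Lemma \ref{lem:S}. Choosing $v = y_\e^+$ and using $\nabla y_\e \cdot \nabla y_\e^+ = |\nabla y_\e^+|^2$, $y_\e y_\e^+ = (y_\e^+)^2 \geq 0$, together with $H_\e(g_\e) \geq 0$ (so the third term is nonnegative and may be dropped to the right), I obtain
\begin{equation*}
\|\nabla y_\e^+\|_{\ld}^2 + \int_D \beta(y_\e)\, y_\e^+ \, dx \leq \int_D (f + \e g_\e)\, y_\e^+ \, dx.
\end{equation*}
The monotonicity of $\beta$ (Assumption \ref{assu:stand}) yields $\beta(y_\e) \geq \beta(0)$ on $\{y_\e > 0\}$ and $y_\e^+ = 0$ elsewhere, hence $\int_D \beta(y_\e)\, y_\e^+ \, dx \geq \beta(0)\int_D y_\e^+ \, dx$ pointwise; moving this term to the right gives
\begin{equation*}
\|\nabla y_\e^+\|_{\ld}^2 \leq \int_D (f - \beta(0) + \e g_\e)\, y_\e^+ \, dx.
\end{equation*}

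It then remains to show the right-hand side is nonpositive for $\e$ small. I split the integral over $E$ and $D \setminus \bar E$ (recall $\mu(\partial E)=0$ by Assumption \ref{assu:E}). On $E$, the first bound in \eqref{f0} gives $f - \beta(0) \leq 0$ \ae, while $g_\e \leq 0$ \ae\ in $E$ by hypothesis, so the integrand is $\leq 0$ there for \emph{every} $\e$. On $D \setminus \bar E$, set $\delta := \beta(0) - \esssup_{x\in D \setminus \bar E} f(x) > 0$ (the strict inequality in \eqref{f0}) and $K := \sup_\e \|g_\e\|_{\lde} < \infty$ (the uniform bound); then $f - \beta(0) + \e g_\e \leq -\delta + \e K$ \ae\ in $D \setminus \bar E$, which is $\leq 0$ as soon as $\e \leq \e_0 := \delta/\max\{K,1\}$. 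Since $y_\e^+ \geq 0$, both contributions are nonpositive, whence $\|\nabla y_\e^+\|_{\ld} = 0$ and the assertion follows.

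The crux is the \emph{asymmetric} use of the two conditions in \eqref{f0}: on $E$ only the non-strict bound $f \leq \beta(0)$ holds, but there the sign constraint $g_\e \leq 0$ supplies the slack, whereas on $D \setminus \bar E$ the control carries no sign and one must absorb the perturbation $\e g_\e$ — this is exactly where the strict inequality and the uniform $\lde$-bound are indispensable and where $\e_0$ originates. The only genuinely load-bearing elementary facts are that $\beta(y_\e)\, y_\e^+ \geq \beta(0)\, y_\e^+$ pointwise (pure monotonicity, no differentiability of $\beta$ needed) and that the $H_\e$-term has a favorable sign; everything else is routine.
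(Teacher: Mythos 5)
Your proof is correct and takes essentially the same route as the paper: the paper likewise combines the strict inequality in \eqref{f0} with the uniform $\lde$-bound to get $f+\e g_\e\leq \beta(0)$ a.e.\,in $D\setminus E$ for small $\e$, uses $g_\e\leq 0$ with the first bound in \eqref{f0} on $E$, and then tests the state equation with $\max\{0,S_\e(g_\e)\}$ together with the monotonicity of $\beta$. Your write-up merely spells out details the paper leaves implicit (dropping the nonnegative $H_\e$-term, the pointwise estimate $\beta(y_\e)y_\e^+\geq\beta(0)y_\e^+$, and the Poincar\'e step), so there is nothing to correct.
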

\begin{proof}As $\{g_\e\}$ is  uniformly bounded in $\lde$, by assumption, there exists $c>0,$ independent of $\e,$ so that 
\[f+\e g_\e \leq f+\e c \leq \beta(0) \quad \ae D \setminus E,\]
for $\e>0$ small enough. Note that the second inequality is a consequence of \eqref{f0}.
\ro{Further, from $g_\e \leq 0 \ \ae E$, by assumption, and \eqref{f0}, we deduce 
\[f+\e g_\e  \leq \beta(0) \quad \ae D .\]
%we observe that $z_\e:=S_\e(g_\e)$ solves the equation
%\begin{equation}  \begin{aligned}
  % -\laplace z_\e + \beta(z_\e)-\beta(0)+\frac{1}{\eps} H_\e( g_\e)z_\e&=f +\e  g_\e-\beta(0) \quad \text{ in }E,
 %  \\z_\e&=0\quad \text{on } \partial E.
 %\end{aligned}
%\end{equation}
By testing  \eqref{eq} associated to $g_\e$ with $\max\{0,S_\e(g_\e)\}$ and by employing the monotonicity of $\beta$ (Assumption \ref{assu:stand}), we see that }
 \[S_\e(g_\e) \leq 0 \quad \ae D\] for $\e>0$ small enough. The proof is now complete.
\end{proof}

\begin{corollary}\label{corr}
Let $\{\bar g_\e\}$ be a sequence of  local minimizers to \eqref{p10} that converges in $\WW$. 
If \eqref{f0} is true, $\ro{N=2},$ and $s>1$, then Assumption \ref{assu:h_a} is satisfied for $\e>0$ small enough.
\end{corollary}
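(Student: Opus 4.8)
The plan is to reduce Assumption \ref{assu:h_a} to a pointwise sign statement for the denominator $\tfrac{1}{\eps}H_\eps'(\bar g_\e)\bar y_\e-\eps$, where $\bar y_\e:=S_\e(\bar g_\e)$, and to control the sign of the state $\bar y_\e$ through Lemma \ref{lem}. The decisive observation is that \eqref{reg_h'} gives $H_\eps'\geq 0$ on all of $\R$. Hence, \emph{provided} $\bar y_\e\leq 0$ a.e.\ in $D$, we get $\tfrac{1}{\eps}H_\eps'(\bar g_\e)\bar y_\e\leq 0$ a.e., so that
\[
\frac{1}{\eps}H_\eps'(\bar g_\e)\bar y_\e-\eps\leq -\eps<0 \quad \ae D.
\]
From this, both requirements in Assumption \ref{assu:h_a} are immediate: the set where the denominator vanishes has measure zero (being contained in a null set), and since $|\tfrac{1}{\eps}H_\eps'(\bar g_\e)\bar y_\e-\eps|\geq\eps$ a.e.\ one obtains $\int_{D\setminus\bar E}|\tfrac{1}{\eps}H_\eps'(\bar g_\e)\bar y_\e-\eps|^{-2}\,dx\leq \eps^{-2}\mu(D\setminus\bar E)<\infty$, the bound being finite because $D$ is bounded.

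It then remains only to guarantee $\bar y_\e\leq 0$ a.e.\ in $D$ for $\e$ small, and for this I would verify the hypotheses of Lemma \ref{lem} for the sequence $\{\bar g_\e\}$. The sign constraint $\bar g_\e\leq 0$ a.e.\ in $E$ holds for free, since $\bar g_\e\in\FF$, cf.\ \eqref{f_tilde}. The one genuine point is the \emph{uniform} $\lde$ bound demanded by Lemma \ref{lem}, and this is exactly where $N=2$ and $s>1$ enter. A sequence converging in $\WW$ is bounded in $\WW$, hence bounded in $\hde$; because $s>1=N/2$, the Sobolev embedding $\hde\embed\lde$ is available, which both places each $\bar g_\e$ in the admissible class $\ld\cap\lde$ and converts the $\hde$ bound into a uniform $\lde$ bound. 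With \eqref{f0} in force, Lemma \ref{lem} then supplies an $\e_0>0$ with $\bar y_\e\leq 0$ a.e.\ in $D$ for all $\e\in(0,\e_0]$, which closes the argument.

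The hard part will be justifying the embedding $\hde\embed\lde$ on the open set $D\setminus\bar E$, which may fail to be smooth; this is precisely what forces $s>N/2$, i.e.\ the pairing of $N=2$ with $s>1$. I would appeal to Assumption \ref{assu:E} (which provides $d(\bar E,\partial D)>0$ and $\mu(\partial E)=0$) to ensure that $D\setminus\bar E$ is regular enough for the embedding to hold. Granting this, everything downstream is elementary: after the uniform $\lde$ bound, the conclusion is a direct application of Lemma \ref{lem} followed by the one-line sign computation displayed above, so no further estimates are needed.
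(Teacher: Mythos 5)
Your proposal is correct and follows essentially the same route as the paper's own proof: the embedding $\WW \embed \ld \cap \lde$ (valid for $N=2$, $s>1$) turns boundedness of the convergent sequence $\{\bar g_\e\}$ in $\WW$ into a uniform $\lde$ bound, Lemma \ref{lem} then yields $\bar y_\e \leq 0$ a.e.\ for $\e$ small, and $H_\eps'\geq 0$ (cf.\,\eqref{reg_h'}) forces $\frac{1}{\eps}H_\eps'(\bar g_\e)\bar y_\e-\eps\leq -\eps$ a.e., from which both parts of Assumption \ref{assu:h_a} follow immediately. Your extra care about the regularity of $D\setminus \bar E$ for the embedding is reasonable but handled in the paper simply by citation (\cite[p.\,88]{kaballo}), so there is no gap.
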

\begin{proof}
Since $\ro{N=2}$ and $s>1$, we have the embedding $\WW \embed \ld \cap \lde,$ see e.g.\,\cite[p.\,88]{kaballo}.
Then, the first assertion in Assumption \ref{assu:h_a} follows from  $H_\e' \geq 0$ (see \eqref{reg_h'})  and Lemma \ref{lem}. Note that these imply 
$$ \frac{1}{\eps}H_\eps'(\bar g_\e)\bar y_\e \leq 0<\eps \quad \ae D \setminus \bar E,$$for $\e>0$ small enough. 
Moreover,
\[|\frac{1}{\eps}H_\eps'(\bar g_\e)\bar y_\e-\eps|=\eps-\frac{1}{\eps}H_\eps'(\bar g_\e)\bar y_\e \geq \eps \quad \ae D \setminus \bar E,\]which yields the second statement in Assumption \ref{assu:h_a}.
\end{proof}
%\begin{remark}\label{remm}
%In \cite[Thm.\,5.2]{p1} it was shown that, given a locally optimal (shape) function $\bar g_{\mathfrak{sh}}$, there exists a sequence of local minimizers of \eqref{p10} that converges to $\bar g_{\mathfrak{sh}}$ in the space  $\WW$. For this, it is necessary that  $s>1$ \cite[Rem.\,4.4]{p1}.
%In \cite{p3} we  pass to the limit $\e \to 0$ in the optimality system associated to \eqref{p10} and we  make use of  \cite[Thm.\,5.2]{p1}  to obtain optimality conditions for the  shape optimization problem from \cite{p1}. To obtain an enhanced  limit optimality system, we will rely on the result from Theorem \ref{thm}, for which it is indispensable that Assumption \ref{assu:h_a} is true. Thanks to Corollary \ref{corr}, we can carry out the limit analysis for $\e \to 0$ in the strong stationarity system, provided that \eqref{f0} is valid and $\beta$ is locally convex around its non-smooth points (Theorem \ref{thm} and Remark \ref{rem:s}). Note that \eqref{f0} is just a slightly modification 
%of the second alternative requirement in   \cite[Assump.\,4.12]{p1}. We indicate  that \cite[Assump.\,4.12]{p1} is needed anyway for the main result \cite[Thm.\,5.2]{p1} to be true.  \end{remark}

\begin{remark}\label{remm}
In \cite[Thm.\,5.2]{p1} it was shown that, given a locally optimal (shape) function $\bar g_{\mathfrak{sh}}$, there exists a sequence of local minimizers of \eqref{p10} that converges to $\bar g_{\mathfrak{sh}}$ in the space  $\WW$. \ro{The analysis therein requires working in two dimensions only \cite[Rem.\,2.7]{p1}. Moreover, it was necessary that}  $s>1$ \cite[Rem.\,4.4]{p1} and that \cite[Assump.\,4.11]{p1} holds true.
In \cite{p3} we  find ourselves again in the framework of \cite{p1} and since \eqref{f0} is  just a slightly modification 
of the second alternative requirement in   \cite[Assump.\,4.11]{p1}, the hypotheses in Corollary \ref{corr} seem to be reasonable when one thinks at the subsequent work \cite{p3}. \\\ro{Since the second alternative requirement in   \cite[Assump.\,4.11]{p1} involves also the condition $y_d\geq 0 \, \ae E$, one can show by a comparison argument combined with $S_\eps(g_\eps) \leq 0$ (Lemma \ref{lem}) and the non-negativity of $\zeta $ (cf.\,Lemma \ref{sign_b} and \eqref{clarke0})  that the approximating adjoint state in the framework of \cite{p3} satisfies $p_\e \leq 0 \,\ae D$ (for $\e$ small), see \eqref{adj}. That is, in the case that $\beta$ is convex around its non-differentiable points, the desired sign condition for the adjoint state  (Remark \ref{needed}) is a direct consequence of the indispensable requirement \cite[Assump.\,4.11]{p1} and  \eqref{f0}, provided that $\e$ is small enough.} \end{remark}

\section*{Acknowledgment}
This work was supported by the DFG grant BE 7178/3-1 for the project "Optimal Control of Viscous
Fatigue Damage Models for Brittle Materials: Optimality Systems".

\appendix
\section{Proof of Lemma \ref{lem:tool}}
\label{sec:a}

Although many of the arguments are well-known  \cite{barbu84}, we give a detailed proof, for completeness and for convenience of the reader.
{We define a control problem \eqref{eq:q_eps} governed by a  state equation where the non-differentiable function ${\beta}$  is replaced by   a smooth approximation (step (I)).
%here we must be careful concerning the choice of the control space, as the control-to-state map is not necessarily weakly continuous from $\ld$ to $\ld$ (step (I)). }
Then, by arguments inspired by e.g.\ \cite{barbu84}, it follows that $\bar g$ can be approximated by a sequence of local minimizers of \eqref{eq:q_eps} (step (II)). Passing to the limit in the adjoint system associated to the regularized optimal control problem \eqref{eq:q_eps} finally yields the desired assertion (step (III)). 
%that includes a density argument \cite[Prop.,2.17]{mcrf}; we underline that the density argument is essential, otherwise existence of solutions to the regularized control problem can not be concluded, since the associated control-to-state map is not weakly continuous. We refer the reader to \cite[Appendix]{st_coup} for a similar situation. As many of the arguments are well known, we go into details only regarding the density issue.
%We will work with the control space $H^1(D) \embed \embed \ld$, see \eqref{eq:q_eps} below. To this end, let $B_{L^2(D )}(\bar g,  \rho)$ be the ball of local optimality of $\bar g$ and $\{g_n\} \subset H^1(D)$ such that 
%\vspace{-0.3cm}
%\begin{equation}\label{l_n_conv}
%g_n \to \bar g  \quad \text{in } L^2(D ) \ \text{ as } n \to \infty.
%  \end{equation}
%Note that the existence of such a sequence is due to  $H^1(D) \dense L^2(D)$. For $n \in \N$ fixed (large enough) with
%\begin{equation}\label{l_n_dist}
%\|g_n - \bar g  \|_{L^2(D)} \leq \rho/2,
%  \end{equation}
\\(I) We denote by $\overline{ B_{L^2(D)}(\bar g,r})$, $r>0,$ the (closed) ball of local optimality of $\bar g \in \FF$. For $\gamma>0$ arbitrary, but fixed, small, consider the smooth  optimal control problem
\begin{equation}\tag{$P_{\gamma}$}\label{eq:q_eps}
 \left.
 \begin{aligned}
  \min_{g \in \FF\cap \clos{B_{\ld}(\bar g,r)}} \quad & J(y,g)+\frac{1}{2}\|g-\bar g\|^2_{\WW}\\   \text{s.t.} \quad & 
  \begin{aligned}[t]
   -\laplace y + \beta_{\gamma}(y)+\frac{1}{\eps}H_\eps(g)y&=f+\eps g \quad \text{ in }D,
   \\y&=0 \quad \text{on } \partial D, \end{aligned} 
 \end{aligned}
 \quad \right\}
\end{equation}where $$J(y,g):=\int_E (y(x)-y_d(x))^2 \; dx  +\alpha\,\int_{D} (1-H_\eps(g))(x)\,dx+\frac{1}{2}\|g-\bar g_{\mathfrak{sh}}\|^2_{\WW},$$
 and $\beta_\gamma:\R \to \R$ is defined as \begin{equation}\label{b_g}
\beta_\g (v):=\int_{-\infty}^{\infty} \beta(v-\g s)\psi(s)\,ds ,
\end{equation}
 where $\psi \in C_c^\infty(\mathbb{R}),\ \psi \geq 0, \ \supp \psi \subset [-1,1]$ and $\int_{-\infty}^{\infty} \psi(s)\,ds=1$.
As a consequence of the Lipschitz continuity of $\beta$ (Assumption \ref{assu:stand}.\ref{it:stand2}) we have
 \ro{ \begin{equation}\label{eps}\begin{aligned}
|\beta_\g(v) -\beta(v)| &\leq\int_{-1}^{1} |\beta(v-\g s)-\beta(v)|\psi(s)\,ds 
\\&\leq \g L_{M+1} \int_{-1}^{1}  |s|\psi(s)\,ds\leq \g L_{M+1}  \quad \forall\,v \in [-M,M],\  \forall\,\g\in (0,1],
 \end{aligned} \end{equation}}where $M>0$ is arbitrary but fixed. Moreover, $\beta_\g$ is Lipschitz continuous in the same sense as $\beta$, with  Lipschitz constant $L_{M+1}$.

Now we turn our attention to the investigation of the state equation in \eqref{eq:q_eps}.
First and foremost, we observe that we can associate a control-to-state map $S_\gamma:\ld \to H_0^1(D) \cap H^2(D)$, since $\beta_\gamma$ has the same properties as its non-smooth counterpart $\beta$, cf.\,Assumption \ref{assu:stand}. \ro{In particular, we have
\begin{equation}
\|S_\gamma(g)\|_{\li} \leq   \ro{C\|f-\beta_\g(0)\|_{\ld}+c\,\eps\,\|g\|_{\ld},}
\quad \forall\,g \in \ld,
\end{equation}where $C>0$ is independent of $\g, \e, f$ and $g$, see \eqref{est_y}. As a consequence of \eqref{eps}, it holds 
\begin{equation}\label{byg}
\|S_\gamma(g)\|_{\li} \leq   c_1+c_2\,\|g\|_{\ld}
\quad \forall\,g \in \ld,\ \forall\,\g \in (0,1],
\end{equation}where $c_1,c_2>0$ are independent of $\g$. In addition, 
 \begin{equation}\label{s_conv}
  S_\gamma(g_{\gamma})-S(g_{\gamma}) \to 0  \quad \text{in } H_0^1(D) \  \ \text{as } \gamma \to 0
  \end{equation}for each uniformly bounded sequence $\{g_\g\}$ in $\ld$. To see this, one subtracts the equation solved by  $S_\gamma(g_{\gamma})$ from the one solved by $S(g_{\gamma}) $ for $\g>0$ fixed, small. Using the monotonicity of $\beta_\g$, $H_\eps\geq 0,$ and \eqref{eps}, we arrive at 
  \[\| S_\gamma(g_{\gamma})-S(g_{\gamma})\|_{H_0^1(D)} \leq \|\beta_\g(S(g_{\gamma})) -\beta(S(g_{\gamma}))\|_{\ld}\leq c\, \g L_{M+1}, \quad \forall\,\g \in (0,1], \]
  where $c>0$ depends only on $D$ and $M:=
\|S(g_{\gamma})\|_{\li}$; note that $M>0$ is independent of $\g$, by \eqref{est_y},  and since $\{g_\g\}$ is uniformly bounded in $\ld$. This proves \eqref{s_conv}.}
As a consequence of the Lipschitz continuity of $S$ (Lemma \ref{lem:S}) we also have 
 \begin{equation}\label{s_conv2}
  S_\gamma(g_{\gamma}) \to S(g)  \quad \text{in } H_0^1(D), \text{ if }g_\g \to g \text{ in }\ld.  \end{equation}

Moreover, we remark that $S_\gamma:\ld \to H_0^1(D)$ is G\^ateaux-differentiable,  and that its derivative $u:=S_\gamma '(g)(h)$ at $g\in \ld$ in direction $h \in \ld$ is the unique solution of 
\begin{equation}\label{lin} 
  \begin{aligned}[t]
   -\laplace u + \beta_\gamma'(y)u+\frac{1}{\eps}H_\eps(g)u+\frac{1}{\eps}H_\eps'(g)yh&=\eps\, h \quad \text{in }D,
   \\u&=0 \quad \text{on } \partial D,\end{aligned} 
\end{equation}
 where  $y := S_\gamma(g)$. Further, by employing the direct method of the calculus of variations and the weak to weak continuity of $S_{\gamma}: \FF \to H_0^1(D) \cap H^1(D) $ as well as the weak to strong continuity of $H_\eps: \FF \to  \ld$ (see Lemma \ref{wc}), 
% and the radial unboundedness of the objective in \eqref{eq:q_eps}, cf.\ Assumption \ref{assu:j_ex}, 
we see that \eqref{eq:q_eps} admits a global solution $g_\gamma \in \FF \cap \clos{B_{\ld}(\bar g,r)}$.  Note that the set $ \FF \cap \overline{ B_{L^2(D)}(\bar g,r)}$ is weakly closed, i.e., for $\{z_n\}\subset  \FF \cap \overline{ B_{L^2(D)}(\bar g,r)}$ we have 
\[z_n \weakly z \quad \text{in }\WW \Rightarrow z \in \FF\cap \overline{ B_{L^2(D)}(\bar g,r)}.\] 
  
(II)  For simplicity, we abbreviate in the following  \begin{subequations} \begin{gather}
j(g ):=J(S(g),g) \quad \forall\, g \in \WW,\label{jj}\\
j_{\gamma}(g):=J(S_\gamma(g),g)+\frac{1}{2}\|g-\bar g\|_{\WW}^2 \quad \forall\, g \in \WW. \label{jn}
\end{gather}\end{subequations} 
Due to \eqref{s_conv2},  it holds 
  \begin{equation}\label{j}
  j(\bar g)=\lim_{\g \to 0} J(S_{\g}(\bar g),\bar g)=\lim_{\g \to 0} j_\g(\bar g) \geq \limsup_{\g \to 0} j_\g(g_\g),  \end{equation}\normalsize where for the last inequality we relied on the fact that $g_\g$ is a global minimizer of \eqref{eq:q_eps} and that $\bar g$ is admissible for \eqref{eq:q_eps}. By the definition of $j_{\g}$, \eqref{j} can be continued as 
   \begin{equation}\label{j1}\begin{aligned}
  j(\bar g ) &\geq \limsup_{\g \to 0}  J(S_\gamma(g_\g),g_\g)+{\frac{1}{2}\|g_\g-\bar g\|_{\WW}^2}
  \\&\quad =\limsup_{\g \to 0}  J(S(g_\g),g_\g)+{\frac{1}{2}\|g_\g-\bar g\|_{\WW}^2}
   \\&\geq \liminf_{\g \to 0}  J(S(g_\g),g_\g)+{\frac{1}{2}\|g_\g-\bar g\|_{\WW}^2}
     \\&  \geq j(\bar g),
\end{aligned}  \end{equation}where the identity in \eqref{j1} is obtained from \eqref{s_conv} \ro{combined with $g_\g \in \overline{ B_{L^2(D)}(\bar g,r)}.$} This also implies the last estimate in \eqref{j1}. From \eqref{j1} we can now conclude \begin{equation}\label{conv_g}
g_\gamma \to \bar g \quad \text{in }\WW.\end{equation}
A localization argument then yields that for  $\g>0$ small enough $g_\gamma$ is  a local minimizer of $\min_{g \in \FF} j_\g(g)$. Further, we abbreviate $y_\g:=S_\g(g_\g)$ and remark that 
\begin{equation}\label{conv_y}
y_\gamma \to \bar y \quad \text{in }H_0^1(D) \cap H^2(D).\end{equation}
While the convergence in $H_0^1(D)$ is a consequence of \eqref{s_conv2}, the convergence in $H^2(D)$ can be proven as follows. First, \ro{by \eqref{byg} and $g_\g \in \overline{ B_{L^2(D)}(\bar g,r)}$}, there is a constant $C>0$, independent of $\g$, so that
\begin{equation}
\|y_\g\|_{\li} \leq C \ro{\quad \forall\,\g \in (0,1].}
\end{equation}
 As $\beta_\g$ is Lipschitz continuous in the same sense as $\beta$, this allows us to prove an estimate similar to \eqref{eq:flip}, namely
\begin{equation}\label{est_b}
   \|\beta_\g(y_\g) - \beta_\g(\bar y)\|_{L^2(D)} \leq L_C \, \|y_\g - \yy\|_{L^2(D)}\ro{\quad \forall\,\g \in (0,1].} 
  \end{equation}Then, by subtracting 
the equation associated to $\yy$ from the one associated to $y_\g$ and rearranging the terms, where one relies on the $C^{1,1}$ regularity of $D$, in combination with \eqref{conv_g}, the continuity of $H_\eps$, and \eqref{eps}, we finally obtain $
y_\gamma \to \bar y \  \text{in }H^2(D)$.

(III) We observe that there exists an adjoint state $p_\g \in \hd \cap H^2(D)$ so that \begin{equation}\label{adj_pp}
-\laplace p_\g+\beta_\g'(y_\g) p_\g+\frac{1}{\eps}H_\eps( g_\g)p_\g=2\chi_E( y_\g-y_d) \ \text{ in }D, \quad p_\g=0 \text{ on }\partial D.
\end{equation}   Now, let $ h \in \FF$ be arbitrary but fixed and  test \eqref{adj_pp} with $u_\g:=S_\g'(g_\g)(h-g_\g)$ and \eqref{lin} with $p_\g$. Then, since $g_\gamma$ is  a local minimizer of $\min_{g \in \FF} j_\g(g)$, we have   \begin{equation}\begin{aligned}\label{necc_g}
0&\leq (2\chi_E( y_\g-y_d),u_\g)_{L^2(D)}-(\alpha H_\eps'( g_\g),h-g_\g)_{\ld} +(2g_\g-\bar g_{\mathfrak{sh}}-\bar g,h-g_\g)_{\WW}
\\&=(-\laplace p_\g+\beta'( y_\g) p_\g+ \frac{1}{\eps}H_\eps( g_\g)p_\g,u_\g)_{L^2(D)}-(\alpha H_\eps'(g_\g),h-g_\g)_{\ld} 
\\&\quad +(2g_\g-\bar g_{\mathfrak{sh}}-\bar g,h-g_\g)_{\WW}
\\&=(-\laplace u_\g,p_\g)_{L^2(D)}+(\beta'( y_\g) u_\g ,p_\g)_{L^2(D)}+\frac{1}{\eps}(H_\eps(g_\g)u_\g,p_\g)_{L^2(D)}-(\alpha H_\eps'( g_\g),h-g_\g)_{\ld} 
\\&\quad +(2g_\g-\bar g_{\mathfrak{sh}}-\bar g,h-g_\g)_{\WW}
\\&=(\eps\, p_\g-\frac{1}{\eps}H_\eps'(g_\g)y_\g p_\g,h-g_\g)_{\ld}-(\alpha H_\eps'( g_\g),h-g_\g)_{\ld} 
\\&\quad+(2g_\g-\bar g_{\mathfrak{sh}}-\bar g,h-g_\g)_{\WW} \quad \forall\,h \in \FF.
%\\&=
%-\frac{1}{\eps} \int_D H_\eps'(g_\g)[p_\g y_\g+\alpha\eps](h-g_\g) \,dx+\eps\int_D p_\g(h-g_\g)\,dx+(g_\g-\bar g,h-g_\g)_{\hdd} .
\end{aligned} \end{equation}
Going back to \eqref{adj_pp}, we see that the monotonicity of $\beta_\gamma$ and the fact that $H_\eps (g_\g)\geq 0$, as well as \eqref{conv_y}, allow us to find uniform bounds for $\{p_\g\}$ so that we can select a subsequence which satisfies 
\begin{equation}\label{conv_p}
p_\g \weakly p \quad \text{in }\hd \cap H^2(D).
\end{equation}Note that here we also relied on
\begin{equation}\label{est_y_g}
\|\beta'(y_\g)\|_{\li} \leq C,
\end{equation}\ro{for $\g>0$ small enough,} with $C>0$ independent of $\g$. This is due to \eqref{conv_y} combined with the Lipschitz continuity of $\beta_\g$ on bounded sets, cf.\,\eqref{eq:flip}.

As a consequence of \eqref{conv_g}, \eqref{conv_y}, the compact embedding $\hd \cap H^2(D) \embed \embed \li$, and the continuity of $H_\eps',$  we can pass to the limit in \eqref{necc_g} and we arrive at \begin{equation}\begin{aligned}
0\leq (\eps\, p-\frac{1}{\eps}H_\eps'(\bar g)\bar y p,h-\bar g)_{\ld}-(\alpha H_\eps'( \bar g ),h-\bar g )_{\ld}+(\bar g-\bar g_{\mathfrak{sh}},h-\bar g)_{\WW}
%\\&=
%-\frac{1}{\eps} \int_D H_\eps'(g_\g)[p_\g y_\g+\alpha\eps](h-g_\g) \,dx+\eps\int_D p_\g(h-g_\g)\,dx+(g_\g-\bar g,h-g_\g)_{\hdd} .
\end{aligned} \end{equation}\ro{for all $h \in \FF.$}
 It remains to show \eqref{lem:adj}-\eqref{lem:clarke0}. To this end, we observe that, by \eqref{est_y_g}, there exist \ro{a not relabelled subsequence $\{\beta_\g'(y_\g)\}$} and  $\zeta \in \li$ so that 
\begin{equation}\label{conv_chi}
\beta_\g'(y_\g) \weakly^* \zeta \quad \text{in }\li.
\end{equation}Passing to the limit in \eqref{adj_pp}, where one relies on \eqref{conv_p}, the continuity of $H_\eps$, \eqref{conv_g} and \eqref{conv_y}, then implies \eqref{lem:adj}. 

Now, to show \eqref{lem:clarke0}, we use   arguments inspired by  the proofs of \cite[Prop.\,2.17, p.\,15]{mcrf} and \cite[Prop.\,3.10, p.\,16]{jnsao}. For completeness and for convenience of the reader, we recall them here.
Let $z$ be a non-differentiable point of $\beta$ for which $\beta$ is convex on $[z-\delta_z,z+\delta_z],$ where $\delta_z>0$ is given by Assumption \ref{assu:reg}. Then, $\beta_\g$ is convex on $[z-3\delta_z/4,z+3\delta_z/4]$ for each $\gamma\in [0,\delta_z/4]$; this can be proven by  calculations that involve the definition of convexity and \eqref{b_g}.  Now, define 
\begin{equation}\label{mz}
\NN_z:=\{x \in D: |\bar y(x)-z|  \leq \delta/2\}.
\end{equation}
 %To this end, we fix $\eps >0$ (small enough) such that 
%$y_\g (x) \in [\yy(x)-\delta/2, \yy(x)+\delta/2]$ for all $x \in \O$, where $\delta >0$ is given by 
%Assumption \ref{assu:f_epss}; notice that this  is due to \eqref{conv_y} and the H\"older continuity of the state, cf.\ Lemmas \ref{lem:S} and \ref{lem:s_eps_lip}.  Further, we  observe that,
% for any $x \in \O$, $f$ is either convex or concave on $[\yy(x)-\delta, \yy(x)+\delta]$, in view of Assumptions \ref{assu:f_epss} and \ref{assu:f_eps}.\ref{it:feps1} and the definition of convexity/concavity. Now, let $\MM$ denote the set of points $x \in \O$ for which $f$ is convex on $[\yy(x)-\delta, \yy(x)+\delta]$. Then, $\beta_\g$ is convex on $[\yy(x)-\delta, \yy(x)+\delta]$ for $x \in \MM$ (otherwise, by Assumption \ref{assu:f_epss}, we will obtain that $f$ is concave on $[\yy(x)-\delta, \yy(x)+\delta]$).  
Then
%\begin{equation}
%\int_\MM \beta_\g'(y_\g(x)) (v(x)-y_\g(x))\varphi(x) \,dx \leq \int_\MM  (\beta_\g(v(x))-\beta_\g(y_\g(x))) \varphi(x) \,dx,\end{equation}
%for all $v \in \li$ with $\|v-\yy\|_{\li} \leq \delta$ and $\varphi \in C^{\infty}_0(\O)$ with $\varphi \geq 0$. 
\begin{equation}\label{eq:mmm0}
 {\int_{ {\NN_z}} \beta_\g'(y_\g(x)) (\pm \tau) \varphi(x) \,dx \leq \int_{ {\NN_z}}  (\beta_\g(y_\g(x) \pm \tau)-\beta_\g(y_\g(x))) \varphi(x) \,dx}\end{equation}
for  $\tau \in \R$ small enough with $|\tau| \leq \delta/8$, $\varphi \in C^{\infty}_c(D)$ with $\varphi \geq 0$ and $\gamma>0$ small enough. 
Note that here we relied on the convergence \eqref{conv_y}, which implies that $|y_\g(x)-\yy(x)| \leq \delta /8 $ in $D$ for $\g>0$ small enough.
Passing to the limit $\g \searrow 0$ in \eqref{eq:mmm0} gives in turn 
%\begin{equation}
%\int_\MM \zeta(x) ( v(x)-\yy(x)) \varphi(x) \,dx \leq \int_\MM  (f(v(x))-f(\yy(x))) \varphi(x) \,dx, \end{equation}
\begin{equation*}
\int_{ {\NN_z}} \zeta(x)  (\pm \tau) \varphi(x) \,dx \leq \int_{ {\NN_z}}  (\beta(\yy(x) \pm \tau)-\beta(\yy(x))) \varphi(x) \,dx, \end{equation*}
where we employed   \eqref{conv_chi} and \eqref{est_b} combined with \eqref{eps}  and \eqref{conv_y}.
Now, the fundamental lemma of the calculus of variations yields 
$$
\frac{\beta(\yy(x))-\beta(\yy(x)-\tau)}{ \tau} \leq \zeta(x)   \leq \frac{\beta(\yy(x)+ \tau)-\beta(\yy(x))}{ \tau} \quad \ae{ {\NN_z}}
$$for all $\tau \in \R$ small enough with $0<|\tau| \leq \delta/8$. Letting $\tau \searrow 0$ results in 
\begin{equation}\label{eq:m} 
\zeta(x) \in [ \beta'_- (\yy(x)), \beta'_+ (\yy(x))] \quad  \ae { {\NN_z}}.
\end{equation}
In order to show that \eqref{eq:m} holds in the case that $\beta$ is locally concave around $z$, we argue in a similar way and obtain
$$
\frac{\beta(\yy(x))-\beta(\yy(x)-\tau)}{ \tau} \geq \zeta(x)   \geq \frac{\beta(\yy(x)+ \tau)-\beta(\yy(x))}{ \tau} \quad \ae{ {\NN_z}}
$$for all $\tau \in \R$ small enough with $0<|\tau| \leq \delta/8$. Thus, passing to the limit $\tau \searrow 0$ yields 
\begin{equation}
\zeta(x) \in [ \beta'_+ (\yy(x)), \beta'_- (\yy(x))] \quad  \ae { {\NN_z}}.
\end{equation} To summarize we have shown that 
\begin{equation}\label{eq:m1}
\zeta(x) \in [\min\{ \beta'_+ (\yy(x)), \beta'_- (\yy(x))\},\max\{ \beta'_+ (\yy(x)), \beta'_- (\yy(x))\}] \quad  \ae { \cup_{z \in \NN} \NN_z},
\end{equation}where $\NN$ denotes the set of non-smooth points of $\beta$.
Now, let $z_1,z_2 \in \NN$ be given such that $\beta$ is differentiable on $(z_1,z_2)$. \ro{Then,  $(z_1+\delta_{z_1}/2,z_2-\delta_{z_2}/2)\neq \emptyset$, as $[z-\delta_{z},z+\delta_{z}], z \in \NN,$ are disjoint, by Assumption \ref{assu:reg}}. Moreover, $\beta'$ is continuous on $(z_1+\delta_{z_1}/4,z_2-\delta_{z_2}/4).$ In view of \eqref{b_g}, it holds $\beta_\g=\beta \star \psi_\g$, where $\psi_\g(\cdot)=\psi(\cdot/\g)/\g$.  Thus, 
\begin{equation}\label{unif_b}
\beta_\g'=\beta' \star \psi_\g \rightrightarrows \beta' \text{ on } [z_1+3\delta_{z_1}/8,z_2-3\delta_{z_2}/8] \text{ as }\g \searrow 0,
\end{equation}\ro{where the symbol $\rightrightarrows$ stands for uniform convergence.}
Now, let $\MM_{z_1}^{z_2}:=\{x \in D: \bar y (x) \in (z_1+\delta_{z_1}/2,z_2-\delta_{z_2}/2)\}$. Then, for $\g>0$ small enough, $y_\g (x) \in (z_1+3\delta_{z_1}/8,z_2-3\delta_{z_2}/8)$ for all $x\in \MM_{z_1}^{z_2};$ this is a consequence of \eqref{conv_y}. By employing again \eqref{conv_y}, combined with the Lipschitz continuity of $\beta$, Lebesgue dominated convergence, as well as  the uniform convergence in \eqref{unif_b}, we  obtain $$\beta_\g'(y_\g) \to \beta'(\yy) \text{ in }L^2(\MM_{z_1}^{z_2}).$$In light of \eqref{conv_chi}, this means that 
$$\zeta(x)=\beta'(\yy)(x) \quad \text{a.e.\,in }\MM_{z_1}^{z_2}.$$
  If $z \in \NN$ is such that  $\beta$ is differentiable on  $(-\infty,z)$, we argue in the exact same way as above to show
\begin{equation}\begin{aligned}
\zeta
=\beta'(\yy) \quad \text{a.e.\,where }\yy \in[-\|\yy\|_{\li}, z-\delta_z/2).
\end{aligned}\end{equation}Note that  since $\{y_\g\}$ is uniformly bounded in $\li$ (see \eqref{conv_y}), all the above arguments apply. The same is true if $z \in \NN$ is such that  $\beta$ is differentiable on  $(z,\infty).$ From the above identities combined with \eqref{mz} and \eqref{eq:m1} we deduce that \eqref{lem:clarke0} is true.
  The proof  is now complete.

\bibliographystyle{plain}
\bibliography{strong_stat_coupled_pde}

\end{document}